\newcommand{\C} {{\mathbb C}}                              
\newcommand{\R} {{\mathbb R}}                              
\newcommand{\Z} {{\mathbb Z}}                              
\newcommand{\Pj} {{\mathbb P}}                             
\newcommand{\B}{{\mathfrak{b}}}
\newcommand{\Di}{{\mathcal{D}}}
\newcommand{\psor}{\Pj SO_2(\R)}
\newcommand{\psoc}{\Pj SO_2(\C)}
\newcommand{\pslr}{{\Pj SL_2(\R)}}
\newcommand{\pslc}{{\Pj SL_2(\C)}}
\newcommand{\pglr}{{\Pj GL_2(\R)}}
\newcommand{\pglc}{{\Pj GL_2(\C)}}
\newcommand{\slr}{{SL_2(\R)}}
\newcommand{\slc}{{SL_2(\C)}}
\newcommand{\cp}{{\mathbb{CP}^1}}
\newcommand{\rp}{{\mathbb{RP}^1}}
\newcommand{\gi}{G_I}
\newcommand{\Con} {{\rm Config}}
\newcommand{\confa}[2]{\Con^{#2}(\, #1 \,)}
\newcommand{\confb}[3]{\Con^{#2,#3}(\,#1 \,)}
\newcommand{\oM} [1] {\ensuremath{{\mathcal M}_{0,#1}(\R)}}                 
\newcommand{\M} [1] {\ensuremath{{\overline{\mathcal M}}{_{0, #1}(\R)}}}    
\newcommand{\cM} [1] {\ensuremath{{\mathcal M}_{0, #1}}}                    
\newcommand{\CM} [1] {\ensuremath{{\overline{\mathcal M}}_{0, #1}}}         
\newcommand{\Cyc}  [1] {\ensuremath{{\overline{\mathcal Z}}{^{#1}}}}        
\newcommand{\kij}[2]{\ensuremath{{\mathcal K}({#1,#2})}}  
\newcommand{\kbar}[2]{\ensuremath{{\overline{\mathcal K}}({#1,#2})}}        
\newcommand{\kdel}[2]{\ensuremath{{\mathcal K}\langle{#1,#2}\rangle}}       
\newcommand{\mnc}[1]{\ensuremath{\overline{\mathcal{M}}_{#1}(\C)}}
\newcommand{\mc}[1]{\ensuremath{\mathcal{M}_{#1}(\C)}}
\theoremstyle{plain}
\newtheorem{thm}{Theorem}
\newtheorem{prop}[thm]{Proposition}
\newtheorem{lem}[thm]{Lemma}
\theoremstyle{definition}
\newtheorem*{defn}{Definition}
\newtheorem*{exmp}{Example}
\theoremstyle{remark}
\newtheorem*{rem}{Remark}
\newtheorem*{ack}{Acknowledgments}
\numberwithin{equation}{section}
\begin{document}

\title {Moduli spaces of punctured Poincar\'{e} disks}

\author{Satyan L.\ Devadoss}
\address{S.\ Devadoss: Williams College, Williamstown, MA 01267}
\email{satyan.devadoss@williams.edu}

\author{Benjamin Fehrman}
\address{B.\ Fehrman: University of Chicago, Chicago, IL 60637}
\email{bfehrman@math.uchicago.edu}

\author{Timothy Heath}
\address{T.\ Heath: Columbia University, New York, NY 10027}
\email{timheath@math.columbia.edu}

\author{Aditi Vashist}
\address{A.\ Vashist: University of Michigan, Ann Arbor, MI 48109}
\email{avashist@umich.edu}

\begin{abstract}
The Tamari lattice and the associahedron provide methods of measuring associativity on a line. The real moduli space of marked curves captures the space of such associativity.  We consider a natural generalization by considering the moduli space of marked particles on the Poincar\'{e} disk, extending Tamari's notion of associativity based on nesting. A geometric and combinatorial construction of this space is provided, which appears in Kontsevich's deformation quantization, Voronov's swiss-cheese operad, and Kajiura and Stasheff's open-closed string theory.
\end{abstract}

\maketitle

\baselineskip=17pt

%
%
\section{Motivation from Physics}
\subsection{}

Our story begins with the famous \emph{associahedron} polytope.  In his 1951 thesis, Dov Tamari described the associahedron $K_n$ as the realization of his lattice of bracketings on $n$ letters \cite{tam}.  Independently, in his 1961 thesis, Jim Stasheff constructed a convex curvilinear version of it for use in homotopy theory in connection with associativity properties of $H$-spaces \cite{jds1}. The vertices of $K_n$ are enumerated by the Catalan numbers and its construction as a polytope was given independently by Haiman (unpublished) and Lee \cite{lee}.  Figure~\ref{f:k4w3p}(a) shows the example of the associahedron $K_4$.

\begin{figure}[h]
\includegraphics[width=\textwidth]{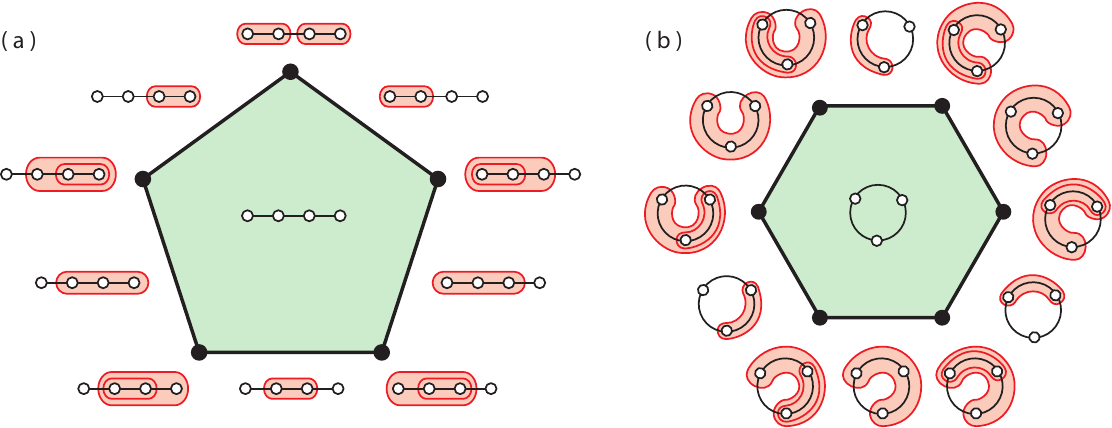}
\caption{(a) Associahedron $K_4$ and (b) cyclohedron $W_3$.}
\label{f:k4w3p}
\end{figure}

\begin{defn}
Let $A(n)$ be the poset of all bracketings of $n$ letters, ordered such that $a \prec a'$ if $a$ is obtained from $a'$ by adding new brackets.   The \emph{associahedron} $K_n$ is a convex polytope of dimension $n-2$ whose face poset is isomorphic to $A(n)$.
\end{defn}

Our interests are based on associahedra as they appear in the world of algebraic geometry. The configuration space of $n$ labeled particles on a manifold $X$ is 
$$\confa X n \ = \ X^n - \Delta,  \ \ \ {\rm where} \ \Delta = \{(x_1,\ldots,x_n) \in X^n \ \ | \ \ \exists \ i,j, \ x_i=x_j \}.$$
The Riemann moduli space ${\mathcal M}_{g,n}$ of genus $g$ surfaces with $n$ marked particles (sometimes called \emph{punctures}) is an important object in mathematical physics, brought to light by Grothendieck in his famous \emph{Esquisse d'un programme}.  A larger framework, based on moduli spaces of \emph{bordered} surfaces of arbitrary genus is considered in \cite{dhv}.
The special case \cM{n} is defined as
$$\cM{n} \ = \ \confa \cp  n \ / \ \pglc \, ,$$
the quotient of the configuration space of $n$ labeled points on the complex projective line by $\pglc$.  There exists a Deligne-Mumford-Knudsen compactification \CM{n} of this space, which plays a crucial role in the theory of Gromov-Witten invariants, symplectic geometry, and quantum cohomology \cite{km}.  

The real points \M{n} of the moduli space are the set of points fixed under complex conjugation; these spaces have importance in their own right, appearing in areas such as $\zeta$-motives \cite{gm}, phylogenetic trees \cite{dm}, and Lagrangian Floer theory \cite{fuk}.   The relationship between \M{n} and the associahedron is given by the subsequent important result:

\begin{thm} \cite[Section 3]{dev1} \label{t:realmod}
The real moduli space of $n$-punctured Riemann spheres
$$\oM{n} \ = \ \confa \rp  n \ / \ \pglr$$
has a Deligne-Mumford-Knudsen compactification \M{n}, resulting in an $(n-3)$-manifold tiled by $(n-1)!/2$ copies of the $K_{n-1}$ associahedron.
\end{thm}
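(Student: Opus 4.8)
The plan is to realize $\oM{n}$ as a disjoint union of open $(n-3)$-cells, one for each combinatorial type of configuration, and then to recognize the Deligne--Mumford--Knudsen compactification $\M{n}$ as the space obtained by gluing the closures of these cells, which I claim are closed associahedra. First, since $\pglr$ is three-dimensional and, for $n\ge 3$, acts freely on $\confa{\rp}{n}$ (a unique real M\"obius transformation carries any three distinct points of $\rp$ to $0,1,\infty$), the quotient $\oM{n}$ is a smooth $(n-3)$-manifold. To count its connected components, note that $\rp\cong S^1$, so each configuration has a well-defined cyclic order; the locus of configurations with a fixed cyclic order is connected, and there are $(n-1)!$ cyclic orders on $\{1,\dots,n\}$. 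The orientation-preserving subgroup $\pslr$ preserves cyclic order, whereas the other component of $\pglr$ reverses it, so the components of $\oM{n}$ are indexed by cyclic orders modulo dihedral reversal, of which there are $(n-1)!/2$.

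Next I would examine one component and its closure in $\M{n}$. Fixing the cyclic order $(1,2,\dots,n)$ and normalizing $x_1\mapsto 0$, $x_{n-1}\mapsto 1$, $x_n\mapsto\infty$ identifies the component with the open $(n-3)$-simplex $\{\,0<x_2<\cdots<x_{n-2}<1\,\}$. Approaching the boundary in $\M{n}$ corresponds to degenerating the configuration into a stable real nodal curve, and along this cell the only degenerations that occur are collisions of a cyclically consecutive block of between $2$ and $n-2$ marked points — a lone marked point cannot occupy its own component, and a block of size $n-1$ would force an unstable one-pointed component — possibly iterated in a nested fashion. Regarding the $n$ marked points as the vertices of a convex $n$-gon, such a consecutive block is a diagonal, a nested family of collisions is a dissection (a set of pairwise non-crossing diagonals), and mutually incompatible collisions correspond exactly to crossing diagonals. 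The classical bijection between dissections of the $n$-gon and partial bracketings of $n-1$ letters then identifies the face poset of the closed cell with $A(n-1)$; since $K_{n-1}$ is, by definition, a polytope whose face poset is $A(n-1)$, the closure of each component is a copy of $K_{n-1}$, whose $C_{n-2}$ vertices (the Catalan number) are the fully degenerate curves — trees of copies of $\rp$ each carrying three special points — in bijection with the triangulations of the $n$-gon.

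Finally I would glue: it remains to see that the $(n-1)!/2$ closed associahedra fit together to form a closed $(n-3)$-manifold, equivalently that each codimension-one stratum of $\M{n}$ lies in the closure of exactly two top cells. Such a stratum records a single collision block $B$, and a point of it consists of a cyclic order on $B$ together with a node, plus a cyclic order on the complementary points with the node inserted in $B$'s former slot; these data can be reassembled into a smooth configuration in precisely two ways, differing by reversing the cyclic order of the $B$-bubble relative to the rest, and they produce the two incident cells. (More cheaply, one may observe that $\CM{n}$ is a smooth proper variety over $\R$, whence $\M{n}=\CM{n}(\R)$ is automatically a smooth compact manifold with $\dim_{\R}\M{n}=\dim_{\C}\CM{n}=n-3$, stratified by the real loci of its normal-crossing boundary divisors.) I expect the crux to be the combinatorial heart of the middle step together with this gluing: checking that the boundary stratification of each compactified cell matches the face poset of $K_{n-1}$ exactly — so that every closed cell embeds as a genuine associahedron, with no self-identifications along its boundary — and that the two-cells-per-facet incidences are globally consistent across all $(n-1)!/2$ cells. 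The component count and the bare manifold structure are, by comparison, routine.
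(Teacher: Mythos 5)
This theorem is quoted in the paper from \cite[Section 3]{dev1} and is not proved there, so there is no in-paper argument to compare against; your proposal is, in outline, the standard proof from that reference (cell decomposition by cyclic orders, identification of each compactified cell with $K_{n-1}$ via polygon dissections, and gluing by the ``twist'' that reverses a bubble's cyclic order across a codimension-one stratum), and it is essentially correct. Two small points: the block--diagonal dictionary is cleanest if the $n$ marked points label the \emph{sides} of the $n$-gon rather than its vertices (a diagonal then genuinely partitions the labels into two consecutive blocks of sizes between $2$ and $n-2$); and you are right that the real content is verifying that each closed cell embeds with face poset exactly $A(n-1)$ and that facets match up two-to-one globally --- your fallback appeal to smoothness and properness of $\CM{n}$ over $\R$ correctly disposes of the manifold structure independently of that combinatorics.
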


\subsection{}

There are numerous generalizations of the associahedron currently in literature.  The closest kin is the \emph{cyclohedron} polytope, originally considered by Bott and Taubes in relation to knot invariants \cite{bt}.  Figure~\ref{f:k4w3p}(b) shows the example of the 2D cyclohedron $W_3$.

\begin{defn}
Let $B(n)$ be the poset of all bracketings of $n$ letters \emph{arranged in a circle}, ordered such that $b \prec b'$ if $b$ is obtained from $b'$ by adding new brackets.   The \emph{cyclohedron} $W_n$ is a convex polytope of dimension $n-1$ whose face poset is isomorphic to $B(n)$.
\end{defn}

And just as the associahedron tiles \M{n}, there is an analogous manifold tiled by the cyclohedron. Indeed, Armstrong et al. \cite{small03} consider a collection of such moduli spaces (based on blowups of Coxeter complexes), each tiled by different analogs of the associahedron polytope, called \emph{graph associahedra} \cite{cd}.  

\begin{thm} \cite[Section 2]{dev2} \label{t:cycmod}
The moduli space $\Cyc{n}$ is the (Fulton-MacPherson) compactification of \, $\confa {S^1} n \ / \ S^1$, tiled by $(n-1)!$ copies of the cyclohedron $W_n$.
\end{thm}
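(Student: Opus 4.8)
The plan is to construct the compactification as an iterated blowup, read off its $(n-1)!$ cyclohedral cells from the combinatorics of the boundary strata, and describe how they are glued, all in close parallel with Theorem~\ref{t:realmod}. First I would set up the open part: recording the $n$ successive gaps between consecutive points exhibits $\confa{S^1}{n}/S^1$ as the union of the interiors of $(n-1)!$ copies of the $(n-1)$-simplex --- one simplex $\Delta_\sigma$, with barycentric coordinates the $n$ gaps, for each cyclic order $\sigma$ of $\{1,\dots,n\}$ --- and these simplices meet along their facets, where one gap closes (two cyclically adjacent points collide and are exchanged), to tile the compact torus $(S^1)^n/S^1$. The Fulton--MacPherson compactification $\Cyc{n}$ is then the iterated blowup of this torus along the proper transforms of the diagonals $\Delta_S=\{\,x_i=x_j\ \text{for all}\ i,j\in S\,\}$, performed in order of increasing dimension. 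Here one must verify that these diagonals form a building set for the braid-type arrangement and meet cleanly, so that each successive center is a smooth submanifold (with corners) and every stage --- in particular $\Cyc{n}$ --- is a compact manifold.

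Second, I would identify the cells. The diagonals meet a fixed simplex $\Delta_\sigma$ exactly in the faces where a block of consecutive gaps vanishes, and this is the tube arrangement of the $n$-cycle $C_n$ whose vertices are the $n$ gaps of $\sigma$; blowing these faces up, in increasing order of dimension, is precisely the construction of the cyclohedron $W_n$ as a truncated simplex, and the boundary strata of the blown-up simplex $\overline{\Delta_\sigma}$ are indexed by the tubings of $C_n$ --- equivalently, by the bracketings $B(n)$ of the circularly arranged letters $\sigma(1),\dots,\sigma(n)$, hence by the faces of $W_n$. To upgrade this identification of face posets to a homeomorphism $\overline{\Delta_\sigma}\cong W_n$ of manifolds with corners, I would induct on $n$: transverse to the stratum attached to a nested family of arcs, the compactification is a product of cones on the boundary complexes of strictly smaller moduli spaces of two kinds --- copies of $\Cyc{m}$, contributing cyclohedra $W_m$ by the inductive hypothesis, and compactified real line-configuration spaces of the colliding clusters, contributing associahedra by Theorem~\ref{t:realmod}. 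A product $W_m\times K_{a_1}\times\cdots\times K_{a_r}$ is exactly the normal structure of $W_n$ along the corresponding face, so the two manifolds with corners coincide.

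Finally I would glue. A codimension-one wall of $\Cyc{n}$ comes from a single gap closing, i.e.\ a pair $\{i,j\}$ of cyclically adjacent points colliding; the transverse datum there is the moduli space $\oM{3}$ of the pair $\{i,j\}$ together with the attaching node on a bubbled circle, a single point (cf.\ Theorem~\ref{t:realmod}), so the wall carries no record of the order of $i$ and $j$ and is a common facet of exactly the two cyclohedra $\overline{\Delta_\sigma}$ and $\overline{\Delta_{\sigma'}}$ whose cyclic orders differ by the transposition $(i\ j)$. Assembling the $(n-1)!$ cyclohedra along all such walls recovers $\Cyc{n}$ with the asserted tiling. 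I expect the main obstacle to be the inductive step of the previous paragraph: verifying the cleanness hypothesis that makes the iterated blowup of $(S^1)^n/S^1$ along the diagonal arrangement well defined and manifold-producing at every stage, and then bootstrapping the identification of each exceptional divisor, as a fibre bundle, with products of the lower spaces $\Cyc{m}$ and the compactified real line configurations supplied by Theorem~\ref{t:realmod}. Once that recursive structure is established, the bracketing bookkeeping --- nested arcs of $C_n$ form the poset $B(n)$, hence the faces of $W_n$ --- and the facet gluing of the $(n-1)!$ cyclohedra become formal.
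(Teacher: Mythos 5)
This theorem is quoted from \cite[Section 2]{dev2} and the paper gives no proof of it, so there is nothing internal to compare against; measured against the argument in that cited source, your proposal takes essentially the same route and is correct. Exhibiting $\confa{S^1}{n}/S^1$ via the $n$ gap coordinates as $(n-1)!$ open simplices tiling the torus $(S^1)^n/S^1$, blowing up the diagonal building set in increasing dimension, recognizing the faces of each simplex met by diagonals as the consecutive-block (tube) faces of the $n$-cycle so that the truncated simplex is $W_n$, and gluing chambers along the walls where one gap closes is exactly the construction there; your identification of the inductive normal-structure/cleanness verification as the real work is also the right place to put the effort. The only imprecisions are cosmetic (e.g.\ a pair colliding on the circle bubbles onto a line segment carrying $\M{3}$, a point, rather than a ``bubbled circle'').
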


Both \M{n} and \Cyc{n} consider how particles move and collide on the circle (viewed as either $\rp$ or $S^1$, depending on the group action).  Similarly \CM{n} encapsulates particle collisions on the sphere $\cp$.  In this article, we consider the Poincar\'{e} disk, a concrete playground where particles in the interior can collide similar to \CM{n} and on the boundary similar to \M{n} and \Cyc{n}.  Indeed, several others have considered a version of this space of punctured disks:  Kontsevich in his work on deformation quantization \cite{kon}, Kajiura and Stasheff from a homotopy algebra viewpoint \cite{ks}, Voronov from an operadic one \cite{vor}, and Hoefel from that of spectral sequences \cite{hoe}.   We claim that this space naturally extends the notion of Tamari's associativity, from particles on lines to particles on disks.

This is a survey article on the moduli space of punctured Poincar\'{e} disks, from a geometric and combinatorial viewpoint.  Section~\ref{s:disk} introduces the foundational setup, whereas Section~\ref{s:fm} provides the famous Fulton-MacPherson compactification based on iterated blowups, along with discussing several examples.  A local construction of this space using group actions on bubble-trees is given in Section~\ref{s:groups}, and Section~\ref{s:combin} ends with some combinatorial results.

%
%
\section{Particles on the Poincar\'{e} Disk} \label{s:disk}
\subsection{}

There exists a natural action of $\pslr$ on the upper halfplane $\mathbb{H}$ (along with the point at infinity) given by
$$\left(\begin{array}{cc} a & b \\ c & d \end{array}\right) \cdot x \ = \ \frac{ax+b}{cx+d}.$$
The diffeomorphism $z \rightarrow (z-i)(z+i)^{-1}$
extends this to an action of $\pslr$ on the Poincar\'{e} disk $\Di$, where infinity is on the boundary of $\Di$. The classical $KAN$ decomposition of $\slr$ is given by
$$K = SO_2(\mathbb{R})
\hspace{1cm}
A = \left\{\left(\begin{array}{cc} a & 0 \\ 0 & a^{-1} \end{array} \right) \;\biggl\lvert\; a > 0\right\} 
\hspace{1cm}
N = \left\{\left(\begin{array}{cc} 1 & x \\ 0 & 1 \end{array}\right) \;\biggl\lvert\; x\in\mathbb{R}\right\}.$$
Analogously, the corresponding decomposition of $\pslr$ is
\begin{equation}
\psor \cdot A \cdot N.
\label{e:kan}
\end{equation}
 The following is a classical result; see \cite[Chapter 5]{rat} for details.

\begin{prop} \label{p:psl}
The action of $\pslr$ preserves the boundary and the interior of $\Di$.  Moreover, 
\begin{enumerate}
\item[1.] if $\mu$ is a point in the interior of $\Di$, then for each element $x$ in the interior of $\Di$, there exists a unique element $\sigma$ in $A\cdot N$ satisfying $\sigma\cdot x = \mu$, and
\item[2.] for each point $y$ on the boundary of $\Di$, there exists a unique element $\sigma$ in $\psor$ satisfying $\sigma\cdot y=\infty$.
\end{enumerate}
\end{prop}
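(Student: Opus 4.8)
The plan is to transport everything to the upper half-plane $\mathbb{H}$, where the $KAN$-coordinates of (\ref{e:kan}) are completely explicit, and then carry the conclusions back through the diffeomorphism $z\mapsto (z-i)(z+i)^{-1}$ of the excerpt. The three facts to keep in hand are that this diffeomorphism is equivariant for the $\pslr$-action, that it sends $\mathbb{H}$ onto the interior of $\Di$ and $\R\cup\{\infty\}$ onto the boundary of $\Di$, and that it sends $i$ to the center $0$ of $\Di$ (and $\infty$ to the boundary point $1$). Granting these, each assertion becomes a statement about Möbius transformations of $\mathbb{H}$ which one checks by hand.

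For the invariance claim I would use the classical identity $\operatorname{Im}(g\cdot z) = \operatorname{Im}(z)/|cz+d|^2$ for $g=\left(\begin{smallmatrix} a & b \\ c & d\end{smallmatrix}\right)$ with $ad-bc=1$: it shows $g$ maps $\mathbb{H}$ into $\mathbb{H}$, and since a real Möbius transformation permutes $\R\cup\{\infty\}$ (with $\infty\mapsto a/c$ and $-d/c\mapsto\infty$), it permutes the boundary of $\mathbb{H}$ and hence its interior. Conjugating by the diffeomorphism above transfers this to the invariance of $\partial\Di$ and $\operatorname{int}\Di$. For the interior statement, the point is that $A\cdot N$ acts simply transitively on $\mathbb{H}$: an element of $A\cdot N$ has the form $\left(\begin{smallmatrix} a & ax \\ 0 & a^{-1}\end{smallmatrix}\right)$ with $a>0$, $x\in\R$, and it sends $i$ to $a^{2}x + a^{2}i$, so the orbit map at $i$ is a bijection $A\cdot N\to\mathbb{H}$. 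Since $A\cdot N$ is a subgroup, a transitive action whose orbit map at a point is a bijection is simply transitive; hence for any interior points $x$ and $\mu$ there is exactly one $\sigma\in A\cdot N$ with $\sigma\cdot x=\mu$, which is assertion (1). (One should also note $A\cdot N$ injects into $\pslr$, since $-I$ has negative diagonal entries, so ``unique in $A\cdot N$'' means the same upstairs and downstairs.)

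For the boundary statement I would similarly show $\psor$ acts simply transitively on $\R\cup\{\infty\}$, equivalently on $\partial\Di$. The rotation $r_\theta = \left(\begin{smallmatrix}\cos\theta & -\sin\theta\\ \sin\theta & \cos\theta\end{smallmatrix}\right)$ sends $y$ to $\frac{y\cos\theta-\sin\theta}{y\sin\theta+\cos\theta}$, which equals $\infty$ precisely when $y=-\cot\theta$, together with the case $\theta=0$ giving $y=\infty$. Passing to $\psor=SO_2(\R)/\{\pm I\}$ identifies $\theta$ with $\theta+\pi$, so $\theta$ ranges over $[0,\pi)$ and $\theta\mapsto -\cot\theta$ is a bijection of $[0,\pi)$ onto $\R\cup\{\infty\}$; thus each boundary point $y$ is carried to $\infty$ by a unique $\sigma\in\psor$. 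I do not expect a genuine obstacle here — the whole proof is a sequence of $2\times2$ computations, and the cited Chapter~5 of \cite{rat} already contains the geometric content. The only place demanding care is the bookkeeping around the projectivization: it is exactly the quotient by $\{\pm I\}$ that turns the $SO_2(\R)$-action on the boundary circle from two-to-one into simply transitive, and one must be sure the diffeomorphism $\mathbb{H}\to\Di$ really does intertwine the two actions before quoting any statement on one side for the other.
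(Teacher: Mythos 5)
Your proof is correct. The paper offers no argument of its own for this proposition --- it is stated as a classical result with a pointer to \cite[Chapter 5]{rat} --- so your explicit computation in the upper half-plane (the identity $\operatorname{Im}(g\cdot z)=\operatorname{Im}(z)/|cz+d|^2$ for invariance, the orbit map $\left(\begin{smallmatrix} a & ax \\ 0 & a^{-1}\end{smallmatrix}\right)\cdot i = a^2x+a^2 i$ for simple transitivity of $A\cdot N$, and the bijection $\theta\mapsto -\cot\theta$ on $[0,\pi)$ for $\psor$) simply supplies the details the paper omits, and it does so correctly; your attention to the quotient by $\{\pm I\}$ and to the equivariance of $z\mapsto(z-i)(z+i)^{-1}$ is exactly the right bookkeeping.
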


\noindent The action of $\pglr$ is naturally characterized by $\pslr$:  The element
$$r=\left(\begin{array}{cc} 1 & 0 \\ 0 & -1 \end{array}\right)$$
 in $\pglr\setminus\pslr$ acts on $\Di$ as the reflection about the geodesic connecting zero to $\infty$.  The coset partition $\left\{\pslr,\;r\cdot\pslr\right\}$ decomposes
$\pglr$ as
\begin{equation} \label{e:pglr}
\mathbb{Z}_2\cdot \psor \cdot A \cdot N
\end{equation}
where $\mathbb{Z}_2=\left\{I,\;r\right\}$.   Figure~\ref{f:pslr} shows the action of $\pslr$ on $\Di$ based on the decomposition given by Eq.~\eqref{e:kan}, where the Poincar\'{e} disk $\Di$ in shaded to help display the action.  Part (a) shows $\Di$ with four of its points labeled, along with the action of (b) reflection $r$, (c) rotation $\psor$, and (d) interior movement $A \cdot N$.

\begin{figure}[h]
\includegraphics[width=\textwidth]{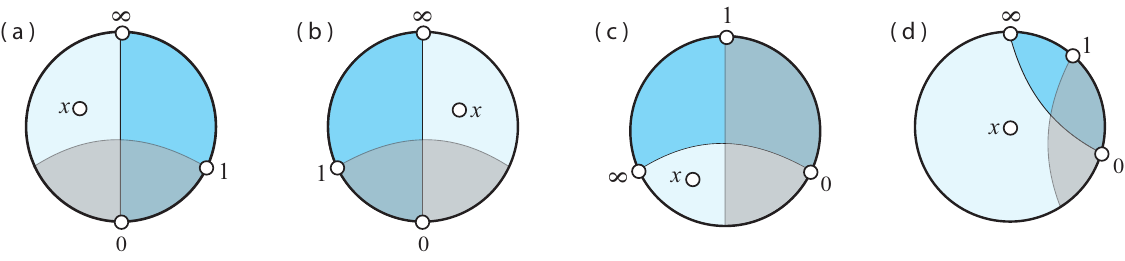}
\caption{Actions on the Poincar\'{e} disk.}
\label{f:pslr}
\end{figure}

\subsection{}

The main interest of this paper is the configuration space
$$\confb \Di n m \ = \ (\Di^n \times \partial \Di^m) - \Delta$$
such that $\Delta$ is the collection of points $(p_1,\ldots,p_n,q_1,\ldots,q_m) \in \Di^n\times\partial\Di^m$ \ where
\begin{enumerate}
\item[1.] (\emph{interior collision}) there exist $i,j$ such that $p_i=p_j$ , or
\item[2.] (\emph{boundary collision})  there exist $i,j$ such that $q_i=q_j$, or
\item[3.]  (\emph{mixed collision})  there exists $i$ such that $p_i \in \partial\Di$.
\end{enumerate}
Points of $\confb \Di n m$ have $n$ distinct labeled particles in the interior of $\Di$ and $m$  distinct labeled particles confined to the boundary of $\Di$.  Since each element of $\pslr$ preserves the boundary and interior of $\Di$, the action of $\pslr$ on $\Di$ naturally extends to an action on $\confb \Di n m$.  Thus, the following is well-defined:

\begin{defn} \label{d:moduli}
The moduli space of punctures on the Poincar\'{e} disk $\Di$ is the quotient
$$\kij{n}{m} \ = \ \confb \Di n m \, / \, \pslr.$$
\end{defn}

We will be concerned mostly with the case when $n, m \geq 1$, with at least one particle $\mu$ in the interior of $\Di$ and one particle $\infty$ on the boundary of $\Di$.  Consider 
$$\confb {\Di}{n-1}{m-1} - \Delta^*$$
such that $\Delta^*$ is the collection of points $(p_1,\ldots,p_{n-1},q_1,\ldots,q_{m-1}) \in \Di^n\times\partial\Di^m$ \ where
\begin{enumerate}
\item[1.] (\emph{interior collision}) there exist $i,j$ such that $p_i=p_j$ or $p_i = \mu$, or
\item[2.] (\emph{boundary collision}) there exist $i,j$ such that $q_i=q_j$ or $q_i = \infty$, or
\item[3.] (\emph{mixed collision}) there exists $i$ such that $p_i \in \partial\Di$.
\end{enumerate}

\begin{prop} \label{p:fixedpts} 
The moduli space $\kij n m$ admits a natural description as the space $\confb \Di {n-1} {m-1} - \Delta^*$.
\end{prop}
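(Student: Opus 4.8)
The plan is to use Proposition~\ref{p:psl} to exhibit, for every configuration in $\confb \Di n m$ with $n,m\ge 1$, a unique representative of its $\pslr$-orbit in which two distinguished particles are pinned to the fixed locations $\mu$ (interior) and $\infty$ (boundary). First I would single out the last interior particle $p_n$ and the last boundary particle $q_m$. Given a point $(p_1,\dots,p_n,q_1,\dots,q_m)\in\confb \Di n m$, apply part~(2) of Proposition~\ref{p:psl} to $y=q_m$: there is a unique $\sigma_1\in\psor$ with $\sigma_1\cdot q_m=\infty$. The rotation $\sigma_1$ moves $p_n$ to some interior point $\sigma_1\cdot p_n$ (still in the interior, by Proposition~\ref{p:psl}); now apply part~(1) with $x=\sigma_1\cdot p_n$ to get a unique $\sigma_2\in A\cdot N$ with $\sigma_2\cdot(\sigma_1\cdot p_n)=\mu$. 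The key point is that elements of $A\cdot N$ fix $\infty$ — the matrices $\left(\begin{smallmatrix} a & 0\\ 0 & a^{-1}\end{smallmatrix}\right)$ and $\left(\begin{smallmatrix} 1 & x\\ 0 & 1\end{smallmatrix}\right)$ both act on the halfplane with $\infty\mapsto\infty$ — so $\sigma_2\sigma_1$ sends $q_m\mapsto\infty$ and $p_n\mapsto\mu$ simultaneously. This produces a representative with the last two particles nailed down, and the remaining $n-1$ interior and $m-1$ boundary particles live in a configuration of $\confb\Di{n-1}{m-1}$; the conditions defining $\Delta^*$ are precisely the old collision conditions together with ``no remaining particle collides with $\mu$ or with $\infty$,'' which is exactly the statement that the full $(n{+}m)$-tuple lies in $\confb\Di n m$.

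Next I would check that this representative is unique, i.e.\ that the stabilizer in $\pslr$ of the pair $(\mu,\infty)$ is trivial. Using the $\psor\cdot A\cdot N$ decomposition~\eqref{e:kan}: if $g=k\cdot a\cdot n$ fixes $\infty$, then since $a\cdot n$ fixes $\infty$ we need $k\cdot(a\cdot n\cdot\infty)=k\cdot\infty=\infty$, forcing $k=I$ by the uniqueness in part~(2) of Proposition~\ref{p:psl} (the identity rotation is the unique one sending $\infty$ to $\infty$). Then $g=a\cdot n\in A\cdot N$ fixes $\mu$, and uniqueness in part~(1) forces $a\cdot n=I$. Hence the stabilizer is trivial, so each orbit meets the ``pinned'' slice in exactly one point. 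This sets up a bijection between $\kij n m = \confb\Di n m/\pslr$ and the pinned slice, which we then identify with $\confb\Di{n-1}{m-1}-\Delta^*$ by forgetting the two fixed particles $\mu,\infty$ and relabeling the rest as $p_1,\dots,p_{n-1},q_1,\dots,q_{m-1}$.

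Finally I would upgrade this set-theoretic bijection to the asserted homeomorphism (``natural description''). The map $\confb\Di n m\to\confb\Di{n-1}{m-1}-\Delta^*$ sending a configuration to $(\sigma_2\sigma_1\cdot p_1,\dots,\sigma_2\sigma_1\cdot p_{n-1},\sigma_2\sigma_1\cdot q_1,\dots,\sigma_2\sigma_1\cdot q_{m-1})$ is continuous because $\sigma_1$ depends continuously on $q_m$ and $\sigma_2$ depends continuously on $\sigma_1\cdot p_n$ (the solutions in Proposition~\ref{p:psl} vary smoothly, being obtained by the explicit $KAN$ formulas), and it is constant on $\pslr$-orbits, hence descends to a continuous map on the quotient; its inverse is the evident inclusion of the slice, which is continuous, so the descended map is a homeomorphism. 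The step I expect to require the most care is verifying that the slice is genuinely a global section — that every orbit meets it and the continuous dependence claims hold uniformly — rather than the algebra of the stabilizer computation, which is immediate from Proposition~\ref{p:psl}; one must be slightly careful that the normalization ``$q_m\mapsto\infty$ then $p_n\mapsto\mu$'' is well-defined precisely because $A\cdot N$ preserves the point $\infty$, which is the crux of why the two normalizations do not interfere.
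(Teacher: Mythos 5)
Your proof is correct and follows essentially the same route as the paper's: use Proposition~\ref{p:psl} and the decomposition~\eqref{e:kan} to pin one interior particle at $\mu$ and one boundary particle at $\infty$, giving a unique representative of each $\pslr$-orbit. The only cosmetic difference is that you normalize the boundary particle first (and then invoke the fact that $A\cdot N$ fixes $\infty$), whereas the paper kills $A\cdot N$ by fixing the interior particle first and then uses the leftover $\psor$ rotation; both orders work, and your version simply spells out the stabilizer and continuity details the paper leaves implicit.
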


\begin{proof}
It follows directly from Proposition~\ref{p:psl} that each orbit under the $\pslr$ action on $\Di$ can be uniquely represented by a particle configuration of this form:  Fixing an interior particle kills the $A\cdot N$ action, and the remaining $\psor$ rotation of Eq.~\eqref{e:kan} is addressed by fixing a boundary particle. 
\end{proof}

\begin{rem}
Although we do not discuss it here, the moduli space $\kij n 0$ of $n$ interior particles on the disk can be interpreted in terms of fixed particles, analogously to Proposition~\ref{p:fixedpts}.  The $\pslr$ action fixes one of the $n$ particles at $\mu$ using $A\cdot N$  and confines another particle to the geodesic connecting $\mu$ to $\infty$ using $\psor$.  The resulting moduli space can be shown to be homeomorphic to $\kij n 0$.  The $\kij 0 m$ case is considered in Section~\ref{s:combin}.
\end{rem}

\subsection{}

So far, the particles in $\kij n m$ are not allowed to collide.  The most natural manner of embracing collisions is by including the places of collision $\Delta$ that were removed by the configuration space.  We define the \emph{naive compactification} of $\kij n m$ as the inclusion of the diagonal $\Delta$ with $\kij n m$, denoted as $\kdel n m$.  A visual notation is now introduced to label the particle collisions on $\Di$ based on \emph{arcs}.

\begin{defn}
An \emph{arc} is a curve on $\Di$ such that its endpoints are on the boundary of $\Di$, it does not intersect itself nor the particles, and encloses at least one interior particle or two boundary particles.
A \emph{loop} is an arc with its endpoints identified, enclosing multiple interior particles.\footnote{Abusing terminology, we will refer to both arcs and loops as simply ``arcs''.}
Two arcs are \emph{compatible} if there are curves in their respective isotopy classes which do not intersect.  
\end{defn}

From the definition of $\Delta$, three types of collisions emerge.  Figure~\ref{f:notate} illustrates collisions in $\kdel 7 6$, part (a) showing a diagram of the disk with the appropriate particles, with examples of an (b) interior collision, (c) boundary collision, and (d) mixed collision.
\begin{figure}[h]
\includegraphics[width=\textwidth]{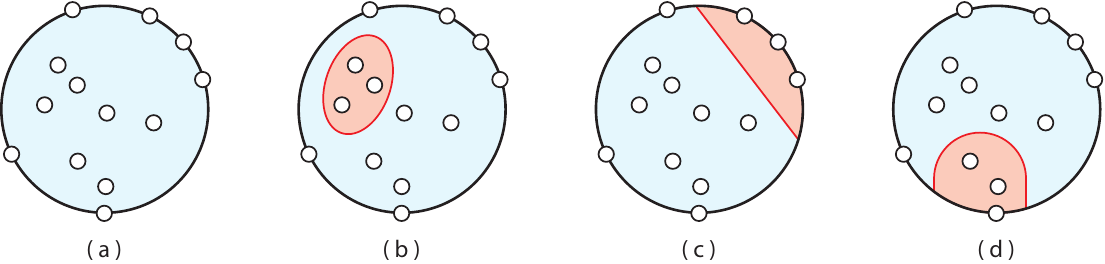}
\caption{Diagrams of arcs on $\Di$ corresponding to collisions of particles.}
\label{f:notate}
\end{figure}

\subsection{}

Let us consider some low-dimensional examples, where \emph{black} particles will represent the fixed particles $\mu$ and $\infty$ on the Poincar\'{e} disk. 

\begin{exmp}
Since the action of $\pslr$ fixes a particle on the interior and one on the boundary, the moduli space $\kdel 1 1$ is a point.
\end{exmp}

\begin{exmp}
The left side of Figure~\ref{f:k12-21} displays $\kdel 1 2$, which is a circle with a vertex (a), corresponding to the free particle on the boundary colliding with the fixed particle $\infty$.
\end{exmp}

\begin{figure}[h]
\includegraphics[width=\textwidth]{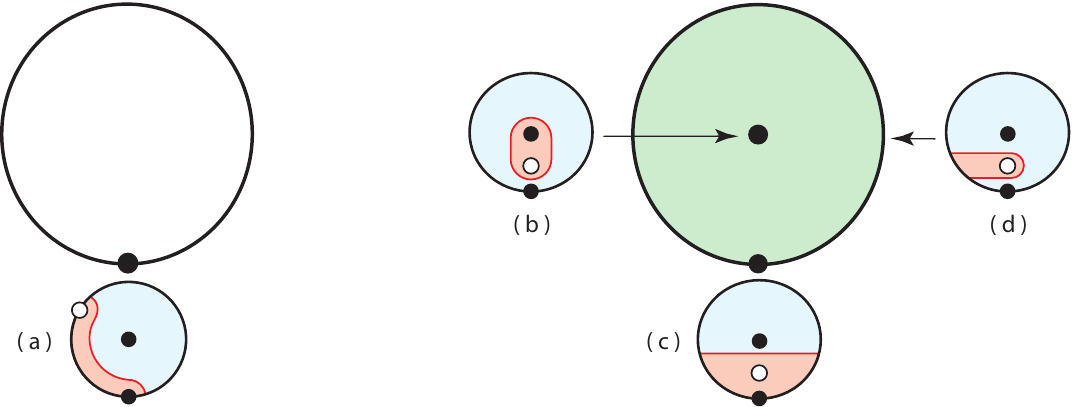}
\caption{$\kdel 1 2$ and $\kdel 2 1$}
\label{f:k12-21}
\end{figure}

\begin{exmp}
The right side of Figure~\ref{f:k12-21} displays $\kdel 2 1$ as a disk with two vertices.  Here, there are three types of collisions signified by three cells: vertex (b) is the free particle colliding with the interior fixed particle $\mu$, vertex (c) is the free particle colliding with $\infty$, and circle (d) is the free particle colliding with the boundary of $\Di$.  Note that this last condition is considered a \emph{collision} since $\Delta^*$ includes $\partial \Di$.
\end{exmp}

\begin{exmp}
Figure~\ref{f:k13} shows the two-torus $\kdel 1 3$.  It has three circles on it, all of which are copies of $\kdel 1 2$,  given by collisions (a, b, c), along with a vertex (d) where all the circles meet. 
\end{exmp}

\begin{figure}[h]
\centerline{\includegraphics[width=.5\textwidth]{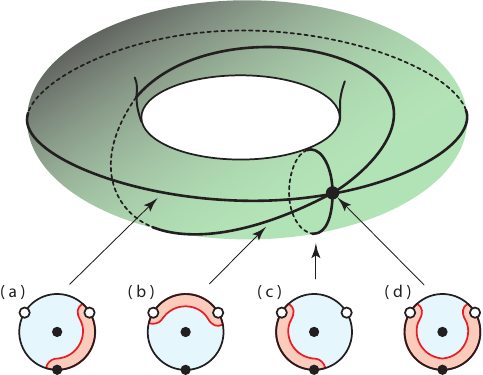}}
\caption{The two-torus $\kdel 1 3$.}
\label{f:k13}
\end{figure}

\begin{exmp}
Figure~\ref{f:k22} displays $\kdel 2 2$ as a \emph{solid} two-torus with diagrammatic labelings of the collisions.   It has one vertex (e) where all the free particles collide with $\infty$, three $\kdel 1 2$ circles (b, c, f), a $\kdel 2 1$ disk (d), and the space $\kdel 1 3$ as the two-torus boundary  (a) of $\kdel 2 2$.
\end{exmp}
 
\begin{figure}[h]
\centerline{\includegraphics[width=.7\textwidth]{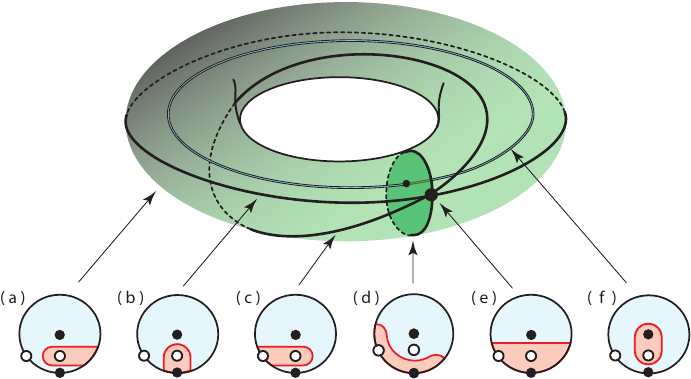}}
\caption{The solid two-torus $\kdel 2 2$.}
\label{f:k22}
\end{figure}

%
%
\section{The Fulton-McPherson Compactification} \label{s:fm}
\subsection{}

Although the naive compactification $\kdel n m$ encapsulates particle collisions, there are other compactifications which contain more useful information.  In general, compactifying $\kij n m$ enables the particles to collide and a \emph{system} is introduced to record the \emph{directions} particles arrive at the collision.  In the work of Fulton and MacPherson \cite{fm}, this method is brought to rigor in the algebro-geometric context.\footnote{The real analog of the Fulton-MacPherson is given by the Alexrod-Singer compactification \cite{as}.}   We now care not just about particle collisions but the space of \emph{simultaneous particle collisions}.

The language to construct this compactification is the algebro-geometric notion of a \emph{blowup}.  Since we are manipulating \emph{real} manifolds with boundary, there are two notions of blowups which are needed:  Let $Y$ be a manifold with boundary.  For a subspace $X$ in the \emph{interior} of $Y$ , the \emph{blowup} of $Y$ along $X$ is obtained by first removing $X$ and replacing it with the sphere bundle associated to the normal bundle of $X \subset Y$. We then projectify the bundle.  

\begin{exmp}
Figure~\ref{f:blowup-ball} displays an example of the blowup of the plane $Y$ along a point $X$:  Part (a) shows the set of normal directions and part (b) shows the point replaced by the sphere bundle.  Part (c) shows the result of the antipodal map along this bundle (in red), where $X$ has now been replaced by $\rp$.  Indeed, the blow up keeps track of the \emph{projective direction} in which one approaches $X$.
\end{exmp}

\begin{figure}[h]
\includegraphics[width=\textwidth]{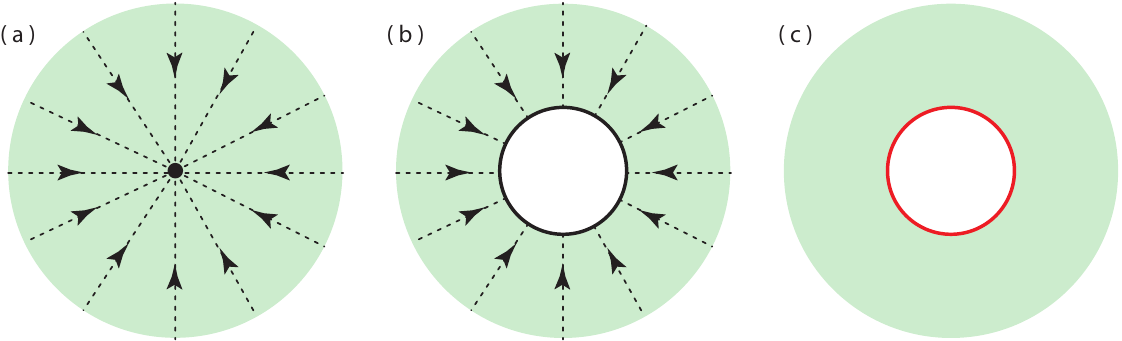}
\caption{(a) Set of directions to a point $X$, (b) blowing up along $X$, and (c) antipodal gluing.}
\label{f:blowup-ball}
\end{figure}

For a subspace $X$ along the \emph{boundary} of $Y$, the \emph{boundary blowup} of $Y$ along $X$ is obtained by first removing $X$ and replacing it with the sphere bundle associated to the normal bundle of $X \subset Y$.   We then projectify the bundle only along its intersection with the boundary.

\begin{exmp}
Figure~\ref{f:blowup-disk} provides an example of the blowup of the halfplane $Y$ along a point $X$ on its boundary:  Part (a) shows the set of normal directions and part (b) shows the point replaced by the normal bundle.  Part (c) shows the result of the antipodal map along the intersection of this bundle with the boundary, in this case identifying the two highlighted points.  In general, a boundary blowup keeps track of the directions in which one approaches $X$ from the interior of $Y$, and the \emph{projective direction} in which one approaches $X$ along the boundary of $Y$.
\end{exmp}

\begin{figure}[h]
\includegraphics[width=\textwidth]{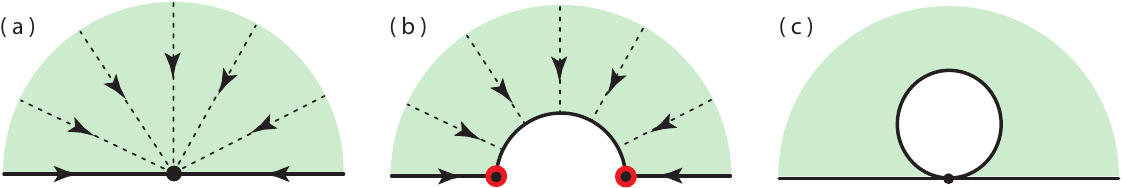}
\caption{(a) Set of directions, (b) blowing up along the point, and (c) gluing along the bounding hyperplane.}
\label{f:blowup-disk}
\end{figure}

\subsection{}

A general collection of blowups is usually non-commutative in nature; in other words, the order in which spaces are blown up is important.  For a given hyperplane arrangement, De Concini and Procesi \cite{dp} establish the existence and uniqueness of a \emph{minimal building set}, a collection of subspaces for which blowups commute for a given dimension.  In the case of the arrangement $X^n - \confa {X}{n}$, their procedure yields the Fulton-MacPherson compactification of $\confa X n$. 

\begin{defn}
The \emph{minimal building set} $\B(n,m)$ of $\kdel n m$ is the collection of elements in $\kdel n m$ labeled with a single arc on $\Di$.
\end{defn}

The elements of $\B(n,m)$ are partitioned according to the magnitude of the collisions they represent.  Each element represents a configuration where $i$ interior particles and $b$ boundary particles have collided, and the sum $2i+b$ determines the dimension of this element.  One important note is that real blowups along cells which are already codimension one do not alter the topology of the manifold.  Thus, we discount codimension one elements from the building set $\B(n,m)$.  The work of De Concini and Procesi give us the ensuing result:

\begin{thm}
The Fulton-McPherson compactification $\kbar n m$ is obtained from $\kdel n m$ by the iterated sequence of blowups of
$\B(n,m)$ in increasing order of dimension. 
\end{thm}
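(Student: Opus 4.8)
The plan is to realize $\kbar n m$ as the \emph{wonderful model} of a subspace arrangement in the sense of De Concini and Procesi \cite{dp}. One first observes that $\kdel n m$ is a smooth manifold with corners in which the collision locus $\Delta$ is an arrangement of submanifolds --- the interior diagonals $p_i=p_j$, the boundary diagonals $q_i=q_j$, and the mixed loci $p_i\in\partial\Di$ (the last sitting along the boundary of the ambient manifold) --- and that the Fulton--MacPherson compactification is the model of this arrangement with respect to its minimal building set, this being the real analogue of \cite{fm} due to Axelrod and Singer \cite{as} and, in the presence of boundary, the configuration spaces of Kontsevich \cite{kon}. Granting that identification, the theorem is exactly the standard iterated-blowup description of such a model: one blows up the proper transforms of the building-set elements, and since the building set is minimal the blowups may be carried out in any linear order refining reverse inclusion --- in particular in increasing order of dimension, with blowups of equidimensional elements commuting --- and the result is independent of the chosen order. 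So the substantive tasks are (i) to identify the strata of $\Delta$, hence the minimal building set, with arcs on $\Di$, and (ii) to check that the De Concini--Procesi formalism survives the passage from interior blowups to boundary blowups along the boundary strata.

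For (i), I would show that a degenerate configuration in $\kdel n m$ is determined by the nested system of particle clusters that have merged, and that the resulting poset of collision patterns is isomorphic to the poset of collections of pairwise compatible arcs under refinement: a single arc enclosing a set $S$ of particles records the single irreducible collision ``all of $S$ coincide'' (at an interior point if the arc is a loop, at a boundary point otherwise), nesting of arcs corresponds to iterating collisions, and disjointness of arcs to independent collisions in separate clusters. The stratum attached to a compatible family $\{A_1,\dots,A_k\}$ is then $\Delta_{A_1}\cap\cdots\cap\Delta_{A_k}$, and the geometric fact to verify is that two arcs are compatible exactly when the corresponding diagonals meet \emph{cleanly} (the tangent space of the intersection being the intersection of tangent spaces), incompatible arcs forcing a non-transverse meeting. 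Granting this dictionary, every stratum of $\Delta$ is a clean intersection of single-arc strata, while a single-arc stratum is \emph{irreducible} --- not itself a clean intersection of strictly larger strata --- because the collision it records fuses $S$ into one cluster that cannot be written as a transverse superposition of coarser collisions. These are precisely the De Concini--Procesi axioms characterizing $\B(n,m)$ as a building set, and minimality is the statement that only these irreducibles are retained; the codimensions recorded in the text then show that the codimension-one single arcs --- a pair of colliding boundary particles, and a lone interior particle pressed to $\partial\Di$ --- may be discarded without effect, blowing up a codimension-one submanifold of a manifold with corners being the identity and leaving the normal-crossing structure used for the later blowups intact.

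For (ii), I would match each retained building-set element with the appropriate blowup of Section~\ref{s:fm}: a loop, recording an interior collision, is resolved by the ordinary blowup, whose exceptional divisor projectivizes the normal bundle and records the projective direction of approach at an interior collision point; every other arc, recording a collision at or toward $\partial\Di$, is resolved by the boundary blowup, whose exceptional divisor records the approach directions from the interior together with a projective direction along $\partial\Di$ --- exactly the half-plane ``screen'' of Kontsevich's configuration spaces \cite{kon}. With these two operations in place, the commutation and order-independence results of \cite{dp}, and the identification of the iterated blowup with the Fulton--MacPherson space $\kbar n m$ --- taken either as the definition of the latter or matched with its closure-based construction through the universal property of blowups --- go through as in the real case \cite{as}. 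Assembling the pieces, the iterated blowup of $\B(n,m)$ in increasing order of dimension yields a compact manifold with corners whose preimage of $\Delta$ is a normal-crossing divisor stratified by compatible families of arcs --- that is, $\kbar n m$.

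The step I expect to be the main obstacle is the geometric dictionary underlying (i)--(ii): one must verify carefully, on the manifold with corners $\Di^n\times\partial\Di^m$, that compatibility of arcs corresponds to clean intersection of the corresponding diagonals including the mixed loci $p_i\in\partial\Di$, that single arcs are exactly the irreducible strata --- so that $\B(n,m)$ genuinely is the minimal building set --- and that replacing interior blowups by boundary blowups along the boundary strata does not disturb the De Concini--Procesi blowup formalism. Once these points are secured, the theorem follows formally from the existence and uniqueness of the minimal building set \cite{dp} together with the iterated-blowup description of the associated wonderful model.
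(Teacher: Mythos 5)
Your proposal follows essentially the same route the paper takes: the paper offers no proof of this theorem at all, simply attributing it to the De Concini--Procesi theory of minimal building sets and wonderful models \cite{dp}, which is exactly the machinery you invoke. Your sketch is a correct and considerably more detailed account of what that citation is actually carrying --- the arc/diagonal dictionary, the clean-intersection and irreducibility checks, the discarding of codimension-one elements, and the adaptation of interior blowups to boundary blowups --- and the obstacles you flag at the end are precisely the verifications the paper leaves implicit in its appeal to \cite{dp}.
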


In the language of algebraic geometry, the effect of a blowup replaces the cell with an \emph{exceptional divisor} of the resulting manifold.  From a topological perspective, a blowup of $Y$ along $X$ ``promotes'' $X$ to become a codimension one cell of $Y$.  An analog from the world of polytopes is the notion of truncation:  Truncating any face of a polytope introduces a new facet, and the truncation of a current facet does not change the combinatorics of the polytope.

\subsection{}

Let us consider some low-dimensional examples of the construction of $\kbar n m$ using iterated blowups.

\begin{exmp}
The left side of Figure~\ref{f:k12-21} displays $\kdel 1 2$, which is a circle with a vertex (a). Since cell (a) is codimension one, no blowups are necessary, and $\kbar 1 2$ is the same as $\kdel 1 2$.
\end{exmp}

\begin{exmp}
The building set $\B(2,1)$ consists of two points, given by the two vertices of Figure~\ref{f:eye-glue}(a); compare with the labels (b) and (c) of Figure~\ref{f:k12-21}.  The blowups along these two cells are first accomplished by replacing these points with the sphere bundle along the normal bundle.  The interior vertex is replaced with a circle, and the boundary vertex with an arc, as in Figure~\ref{f:eye-glue}(b).  Performing the projective identification is shown in part (c), where the interior circle has an antipodal map (in red), and the two points on the boundary are now identified.
\end{exmp}

\begin{figure}[h]
\includegraphics[width=\textwidth]{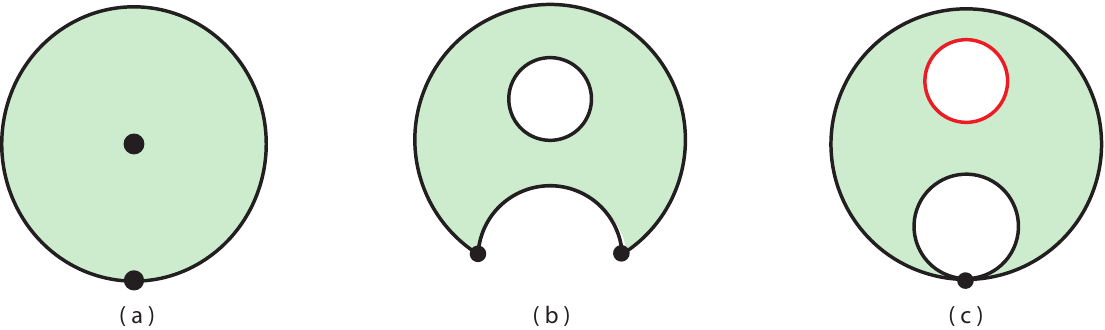}
\caption{(a) $\kdel 2 1$, (b) the eye of Kontsevich, and (c) $\kbar 2 1$.}
\label{f:eye-glue}
\end{figure}

\begin{rem}
The picture in Figure~\ref{f:eye-glue}(b) appears in the work by Kontsevich on deformation quantization \cite[Figure 7]{kon}, where he calls this the ``\emph{eye}''.
\end{rem}

\begin{exmp}
Figure~\ref{f:k13} shows $\kdel 1 3$ as a two-torus, with the building set $\B(1,3)$ containing only one vertex, labeled by (d).  Blowing up this vertex results in the connected sum of the two-torus with an $\R\Pj^2$, given in Figure~\ref{f:k13-tiles}(a).  Part (b) of this figure displays the two tiling hexagons along with their gluing map used to construct $\kbar 1 3$.
\end{exmp}

\begin{figure}[h]
\includegraphics[width=\textwidth]{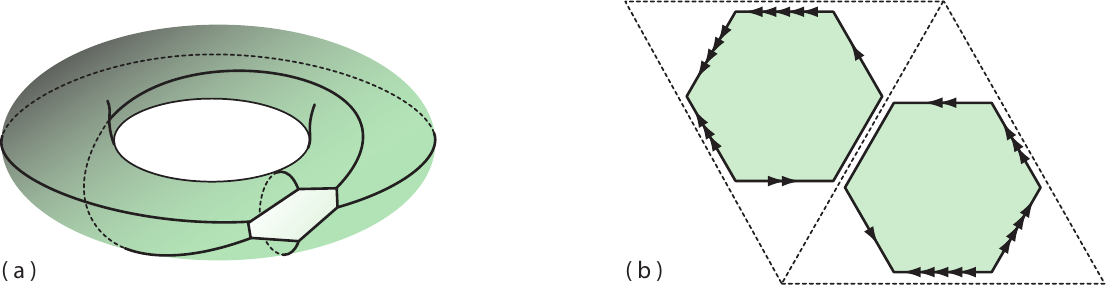}
\caption{(a) $\kbar 1 3$ obtained from (b) gluing two hexagons.}
\label{f:k13-tiles}
\end{figure}

\begin{exmp}
The building set $\B(2,2)$ consists of four elements, shown in Figure~\ref{f:k22}: three circles (b, c, f), and one vertex (e).  
A redrawing of this solid two-torus cut open along the disk (e) is given in Figure~\ref{f:k22-iterate}(a).  The construction of $\kbar 2 2$ follows from iterated blowups in increasing order of dimension.  Figure~\ref{f:k22-iterate}(b) shows the blowup along the vertex on the boundary, and part (c) shows the blowup along the three circles.  Although not displayed in these figures, there are gluing maps from the blowups identifying faces of these cells.  The combinatorics of this space is expanded upon in Figure~\ref{f:k22-bar}.
\end{exmp}

\begin{figure}[h]
\includegraphics[width=\textwidth]{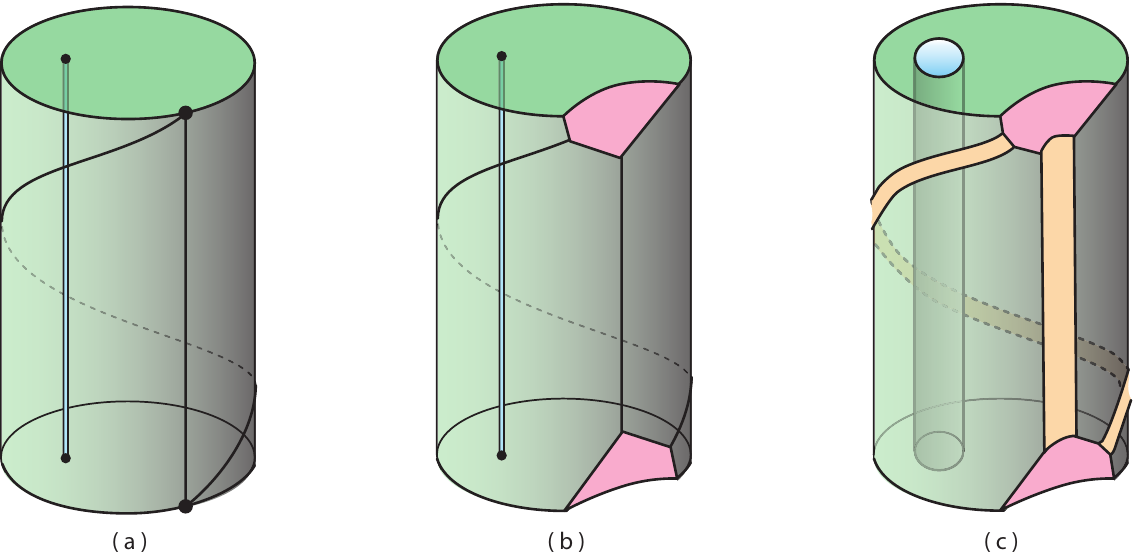}
\caption{Iterated truncation of $\kij 2 2$ resulting in $\kbar 2 2$.}
\label{f:k22-iterate}
\end{figure}

\subsection{}

Recall that $\kdel n m$ is stratified by compatible arcs on the disk $\Di$ representing collisions.  For the compactified moduli space $\kbar n m$, the stratification is given by compatible \emph{nested} arcs, the notion coming from the work of Fulton-MacPherson \cite{fm}.  Consider Figure~\ref{f:nested}:  parts (a) and (b) show valid compatible arcs, representing cells of $\kbar 7 5$.  Part (c) shows compatible nested arcs of the disk, representing a cell of $\kbar 7 5$.   It is this nesting which gives rise to a generalized notion of Tamari's associativity.  

\begin{figure}[h]
\includegraphics[width=\textwidth]{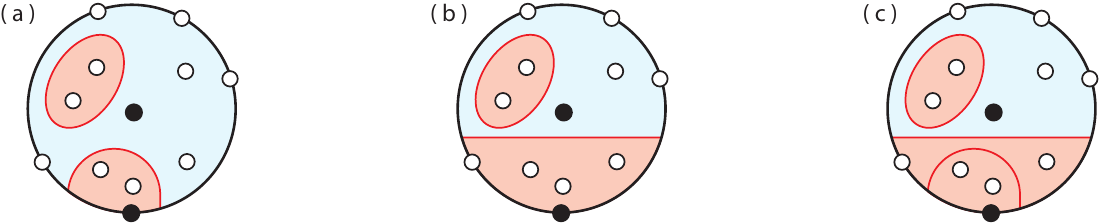}
\caption{Compatible arcs (a) and (b) of $\kdel 7 5$, along with nested arcs (c) of $\kbar 7 5$.}
\label{f:nested}
\end{figure}

We summarize as follows:

\begin{thm}
The space $\kbar n m$ of marked particles on the Poincar\'{e} disk is a compact manifold of real dimension $2n + m -3$, naturally stratified by nested compatible arcs, where $k$ arcs on $\Di$ correspond to a codimension $k$ face of $\kbar n m$. 
\end{thm}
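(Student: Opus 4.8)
The plan is to obtain all three assertions from the fixed-particle model of Proposition~\ref{p:fixedpts} together with the theory of wonderful compactifications of De Concini--Procesi \cite{dp}, in the real, bordered form underlying the Axelrod--Singer compactification \cite{as}. I begin with dimension and compactness. By Proposition~\ref{p:fixedpts}, and adjoining the collision loci $\Delta^*$ to pass to the naive compactification, $\kdel n m$ is homeomorphic to $\Di^{n-1}\times\partial\Di^{m-1}$ --- the $(n-1)$-fold power of the closed Poincar\'{e} disk times the $(m-1)$-fold power of its boundary circle --- which is a compact manifold with corners of real dimension $2(n-1)+(m-1)=2n+m-3$; the examples of Section~\ref{s:disk} confirm this. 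Since $\kbar n m$ is obtained from $\kdel n m$ by the iterated blowup along $\B(n,m)$ recalled above, and every such blowup (of either kind) deletes a closed, hence compact, subspace and glues in a compact projectivized sphere bundle without altering the ambient dimension, $\kbar n m$ is compact of real dimension $2n+m-3$.

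For the manifold structure, the real content is that this iterated blowup is insensitive to the order of blowing up building-set elements of a fixed dimension and yields a manifold with corners. At the first round --- blowing up the lowest-dimensional elements of $\B(n,m)$, whose loci inside $\Di^{n-1}\times\partial\Di^{m-1}$ are still each either entirely interior or entirely contained in the corner boundary --- one uses directly the two blowup operations of Section~\ref{s:fm}: the interior blowup when the single arc records a collision occurring at an interior point of $\Di$ (an interior or boundary collision), the boundary blowup when it records an interior particle reaching $\partial\Di$ (a mixed collision). At later rounds the proper transform of a not-yet-blown-up arc may meet one or more exceptional divisors of earlier boundary blowups transversally, and there one blows it up using the general blowup of a submanifold of a manifold with corners that meets the boundary faces cleanly --- a construction built locally from the two basic operations and already inherent in the real Fulton--MacPherson/Axelrod--Singer picture \cite{as}. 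What I would then check, by the usual local-coordinate computations, is that (i) every building-set element and every proper transform thereof remains such a corner-respecting submanifold, (ii) transverse (or clean) intersections of such submanifolds persist after a blowup, and (iii) blowups of a fixed dimension commute --- this last being exactly the content of $\B(n,m)$ being a building set in the sense of \cite{dp}. Performing the blowups in increasing order of dimension then produces a well-defined compact manifold with corners $\kbar n m$.

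For the stratification, I would observe that each blowup in the sequence promotes its center to a codimension-one exceptional divisor of the output; these, together with the codimension-one strata already present in $\kdel n m$ (each recorded by a single codimension-one arc, which is not blown up), exhaust the codimension-one faces of $\kbar n m$, so codimension-one faces correspond bijectively to single arcs on $\Di$. A general face of the natural stratification is a transverse intersection of such codimension-one faces, and in a wonderful/Fulton--MacPherson compactification the intersection of the faces indexed by arcs $A_1,\dots,A_k$ is nonempty precisely when $A_1,\dots,A_k$ form a nested family --- any two being disjoint or one containing the other --- which is exactly a compatible nested family of arcs in the sense of Figure~\ref{f:nested}. Hence $k$ such arcs cut out a face of codimension $k$, as claimed.

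The step I expect to be the main obstacle is the manifold claim. Away from $\partial\Di$ one is in the classical De Concini--Procesi situation, but near the boundary one must control how the boundary blowups coming from mixed collisions interact with one another and with the corner structure already carried by $\Di^{n-1}\times\partial\Di^{m-1}$ --- in particular, that each intermediate blowup center remains a submanifold meeting the existing boundary faces cleanly, so that the iterated construction stays inside the category of manifolds with corners and the final space is a genuine compact manifold rather than merely a stratified space. Keeping track of the interleaving of the two kinds of blowup, and of the new corners they create, is the technical heart of the argument.
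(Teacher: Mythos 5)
Your proposal is correct and follows essentially the same route the paper takes: the paper states this theorem as a summary of its own construction (the fixed-particle model of Proposition~\ref{p:fixedpts}, the building set $\B(n,m)$, and the iterated interior/boundary blowups justified by De Concini--Procesi and Fulton--MacPherson), offering no separate proof, and your argument simply assembles those same ingredients while making explicit the manifold-with-corners bookkeeping that the paper leaves implicit. The technical points you flag (commutativity of same-dimension blowups, clean intersection of proper transforms with corner faces, nested families indexing nonempty intersections of exceptional divisors) are exactly the ones the paper delegates to \cite{dp}, \cite{fm}, and \cite{as}.
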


This space can be viewed from the perspective of open-closed string field theory, where Figure~\ref{f:notate} is reinterpreted as displaying (b) open strings, (c) closed strings, and (d) open-closed strings.   There is a rich underlying operadic perspective to this space, brought to light by the works of others such as Voronov \cite{vor}, Kajiura-Stasheff \cite{ks}, and Hoefel \cite{hoe}.  The \emph{swiss-cheese} operad structure of Voronov extends the little disks operad by incorporating boundary pieces, envisioned by replacing the particles of Figure~\ref{f:notate} by loops (interior) and arcs (boundary).  

Kajiura and Stasheff introduced the open-closed homotopy algebras (OCHA) by adding other operations to an $A_\infty$ algebra over an $L_\infty$ algebra.    The meaning of these extra operations amounts to making a closed string become open; from our viewpoint, it signifies the collision of the interior particle with the boundary of $\Di$.  Finally, Hoefel has shown that the OCHA operad is quasi-isomorphic to the swiss-cheese operad based on calculations of spectral sequences.

%
%
\section{Group Actions on Screens} \label{s:groups}
\subsection{}

Instead of a global perspective, based on blowups of cells in the building set, there is a local, combinatorial perspective in which to construct this moduli space.  Fulton and MacPherson describe their compactification from the collision perspective as follows:
As points collide along a $k$-dimensional manifold, they land on a $k$-dimensional \emph{screen}, which is identified with the point of collision.  These screens have been dubbed \emph{bubbles}, and the compactification process as \emph{bubbling}; see \cite{pw} for details from an analytic viewpoint.  Now these particles on the screen are themselves allowed to move and collide, landing on higher level screens.   Kontsevich describes this process in terms of a magnifying glass:  On any given level, only a configuration of points is noticeable; but one can zoom-in on a particular point and peer into its screen, seeing the space of collided points.

As the stratification of this space is given by collections of compatible nested arcs, the compactification of this space can be obtained by the \emph{contraction} of these arcs --- the degeneration of a decomposing curve $\gamma$ as its length collapses to zero.  There are three possible results obtained from a contraction, as in Figure~\ref{f:bubs}.  Part (a) shows the contraction of a loop (closed arc) capturing interior collisions, resulting in what we call a \emph{sphere bubble}.  Notice the arc has now been identified with a marked particle. Part (b) displays contractions of an arc capturing boundary collisions; the resulting bubble is called a \emph{flat bubble}.  Notice that all the particles are only on the boundary of this new disk.  Finally, (c) shows contractions for a mixed collision, called a \emph{punctured bubble}.  

\begin{figure}[h]
\includegraphics[width=\textwidth]{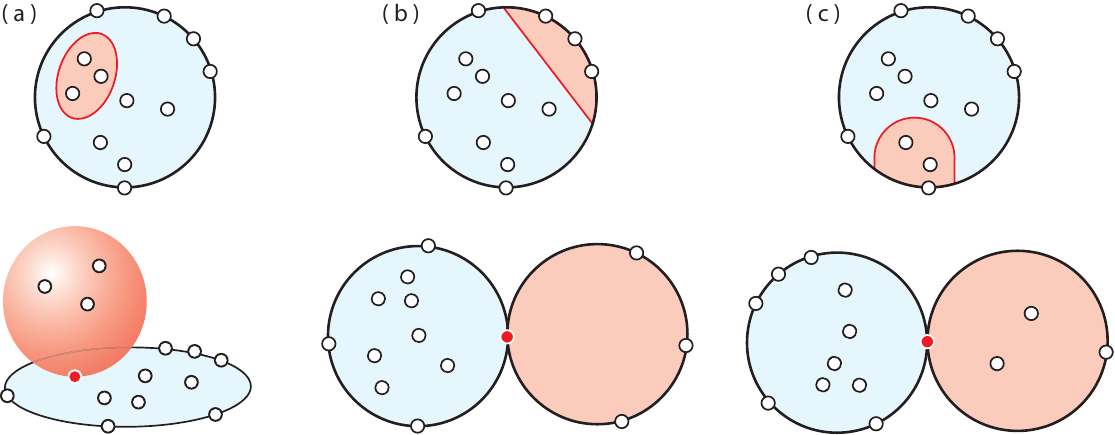}
\caption{(a) Sphere bubble, (b) flat bubble, and (c) punctured bubble.}
\label{f:bubs}
\end{figure}

Given a collection of compatible nested arcs, collapsing all the arcs results in several bubbles, sometimes refereed to as a \emph{bubble tree}.   There exists a natural dual perspective in which to visualize the cells of $\kbar n m$, as given by Figure~\ref{f:trees}.   Based on the notation of Hoefel \cite{hoe}, we obtain \emph{partially-planar} trees, where the interior particles correspond to leaves in $\R^3$ and boundary particles to leaves restricted to the $xy$-plane. The spatial edges (red zig-zags) are allowed to move freely, whereas the cyclic ordering of the planar ones (black lines) are determined by the disk.  The natural operad composition maps for such trees are given in \cite[Section 4]{hoe}.  If we choose a boundary particle to be fixed at $\infty$, then \emph{rooted} trees arise.

\begin{figure}[h]
\includegraphics[width=\textwidth]{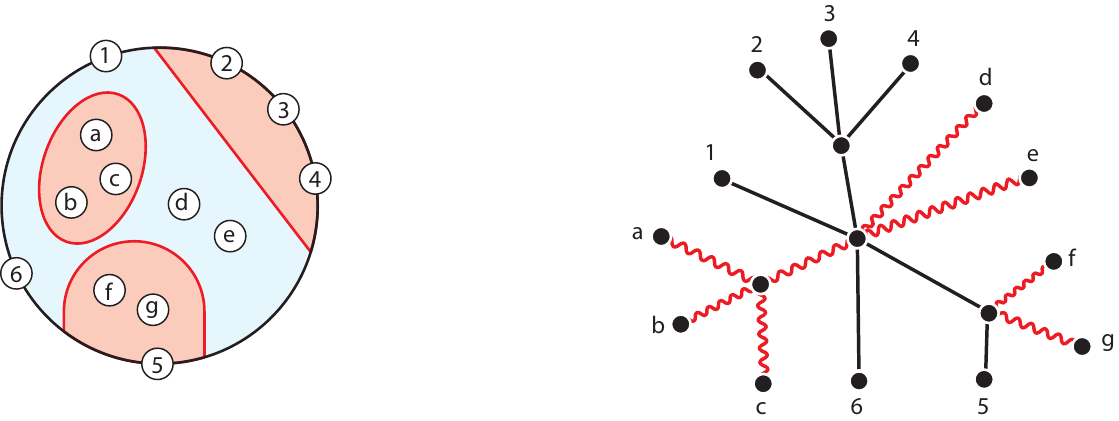}
\caption{Duality between arcs on the disk and partially-planar trees.}
\label{f:trees}
\end{figure}

\subsection{}

These three types of bubbles are the \emph{screens} described by Fulton and MacPherson.  According to the compactification, there is an action of an automorphism group on each screen (bubble).   In order to describe the groups, a slight detour is taken.  Similar to $\pslr$, the matrix group $\pslc$ acts on $\cp$, considered as $\C \cup \left\{\infty\right\}$.  The decomposition of $\slc$ analogous to Eq.~\eqref{e:kan} is given by
$$K_\C=SO_2(\C)
\hspace{1cm}
A_\C=\left\{\left(\begin{array}{cc} z & 0 \\ 0 & z^{-1} \end{array}\right) \;\biggl\lvert\; z\in\C^{\times}\right\}
\hspace{1cm}
N_\C=\left\{\left(\begin{array}{cc} 1 & z \\ 0 & 1 \end{array}\right) \;\biggl\lvert\; z\in\C\right\}.$$
The corresponding decomposition of $\pslc$ is given by
$$\mathbb{P}SO_2(\mathbb{C})\cdot A_\mathbb{C}\cdot N_\mathbb{C}.$$
An interest of this paper is the particular group
\begin{equation}
\label{e:gi}
\gi \ = \ \Z_2\cdot\psoc\cdot A\cdot N_\C
\end{equation}
acting on $\cp$. Here, $\mathbb{Z}_2=\left\{I,r\right\}$ and $A_\C = S^1 \cdot A$, where $S^1$ is the group of rotations
$$\left\{\left(\begin{array}{cc} e^{i\theta} & 0 \\ 0 & e^{-i\theta}\end{array}\right) \;\biggl\lvert\; \theta\in\mathbb{R}\right\}.$$
\begin{prop} \label{p:psoc}
The action of $\gi$ on $\cp$ is given as follows:
\begin{enumerate}
\item[1.] The element $r$ acts on each circle of constant norm in $\cp$ as the antipodal map.
\item[2.] For each point $x$ in $\cp$, there exists a unique element $\sigma$ in $\psoc$ satisfying $\sigma\cdot x=\infty$.
\item[3.] For any two elements $x,y$ in $\cp \setminus \{ \infty \}$, there exists a unique element $\sigma$ in $A\cdot N_\mathbb{C}$ satisfying $\sigma\cdot x=0$ and $||\sigma\cdot y||=1$.
\end{enumerate}
\end{prop}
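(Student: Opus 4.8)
The plan is to verify the three assertions by separate, essentially explicit, computations, in each case moving to coordinates on $\cp=\C\cup\{\infty\}$ adapted to the relevant factor of $\gi$. The single structural input is the Cayley transform $z\mapsto(z-i)(z+i)^{-1}$ of Section~\ref{s:disk}: as a matrix it conjugates the standard rotation $\bigl(\begin{smallmatrix}\cos\theta&\sin\theta\\-\sin\theta&\cos\theta\end{smallmatrix}\bigr)$ to $\mathrm{diag}(e^{i\theta},e^{-i\theta})$, hence conjugates $SO_2(\C)$, and with it $\psoc$, onto the image of the diagonal torus $A_\C$ in $\pslc$. Assertion~(1) then needs nothing beyond the observations that $r=[\mathrm{diag}(1,-1)]=[\mathrm{diag}(i,-i)]$ already lies in $\pslc$ (so $\gi\subset\pslc$, and $\Z_2=\{I,r\}$ is an honest order-two subgroup), and that this element acts on $\cp$ by $z\mapsto -z$; since $|{-z}|=|z|$ it preserves every circle $\{\,|z|=c\,\}$, on which $z\mapsto -z$ is rotation by $\pi$, i.e.\ the antipodal map, the degenerate circles $\{0\}$ and $\{\infty\}$ being fixed.

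For assertion~(2), the conjugation above reduces the problem to the image of $A_\C$ in $\pslc$, which acts on $\cp$ by $z\mapsto\mu z$ with $\mu\in\C^{\times}$ (the effective parameter being the square of the diagonal entry). For any $z_0\notin\{0,\infty\}$ the map $\mu\mapsto\mu z_0$ is a bijection of $\C^{\times}$ onto $\cp\setminus\{0,\infty\}$, so this torus acts simply transitively there; conjugating back by the Cayley transform, which sends $i\mapsto 0$ and $-i\mapsto\infty$, shows $\psoc$ fixes precisely $i$ and $-i$ and acts simply transitively on the complement. Since $\infty$ lies in that complement, for each $x\in\cp$ away from the two fixed points there is a unique $\sigma\in\psoc$ with $\sigma\cdot x=\infty$. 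This is the step carrying the real content; its one delicate feature is the bookkeeping that the two $\psoc$-fixed points are $i$ and $-i$, and in particular are not $\infty$, which is exactly what lets the factors of $\gi$ complement one another --- $\psoc$ carrying an arbitrary point to $\infty$, and $A\cdot N_\C$ realizing the stabilizer of $\infty$.

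For assertion~(3), the product $A\cdot N_\C$ is the set of transformations $z\mapsto a^{2}z+a^{2}w$ with $a>0$ and $w\in\C$; writing $\mu=a^{2}>0$ and $c=a^{2}w\in\C$ this is the bijectively parametrized family $z\mapsto\mu z+c$ of positive dilations followed by translations, each fixing $\infty$. Given distinct $x,y\in\cp\setminus\{\infty\}$, the condition $\sigma\cdot x=0$ forces $c=-\mu x$, so $\sigma(z)=\mu(z-x)$, and then $\|\sigma\cdot y\|=\mu|y-x|=1$ forces $\mu=|y-x|^{-1}$; both equations are uniquely solvable --- positivity of $\mu$ being precisely what makes $a=\sqrt{\mu}$ unique --- producing the single transformation $\sigma\colon z\mapsto(z-x)/|y-x|$. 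Thus (1) and (3) are bare computations, and the proposition is assembled once the torus description underlying (2) is in hand.
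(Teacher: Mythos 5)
Your proof is correct, and it takes a more explicit route than the paper's. The paper's proof ignores assertion (1) entirely, whereas you verify it by identifying $r$ with $[\mathrm{diag}(i,-i)]\in\pslc$ acting as $z\mapsto -z$. For assertion (2) the paper argues abstractly: each $\sigma\in\psoc$ is a homeomorphism of $\cp$, hence sends a unique point to $\infty$, and this ``characterizes each element up to sign''; you instead conjugate $SO_2(\C)$ to the diagonal torus via the Cayley transform and read off simple transitivity from $z\mapsto\mu z$, $\mu\in\C^\times$. Your route buys two things the paper's elides. First, it makes surjectivity visible, and in doing so exposes that assertion (2) as literally stated fails at the two points $\pm i$ fixed by every element of $\psoc$ (no $\sigma$ can carry them to $\infty$); the paper's proof never addresses surjectivity and silently skips this caveat, which your restriction ``away from the two fixed points'' correctly records. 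Second, it avoids the paper's justification that ``$SO_2(\C)$ is disconnected,'' which is false --- $SO_2(\C)\cong\C^\times$ is connected; the correct point, implicit in your parametrization, is that the fiber of $\sigma\mapsto\sigma^{-1}(\infty)$ over a point is $\{\pm\sigma\}$, which projectivization collapses. For assertion (3) the two arguments coincide: both reduce $A\cdot N_\C$ to the family $z\mapsto\mu z+c$ with $\mu>0$ and solve the two conditions uniquely, though your remark that positivity of $\mu$ pins down $a=\sqrt{\mu}$ is a cleaner justification than the paper's appeal to connectedness of $A\cdot N_\C$.
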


\begin{proof}
Since the elements of $\psoc$ act homeomorphically on $\cp$, the action of each element sends a unique point
$x$ in $\cp$ to $\infty$.  Because $SO_2(\mathbb{C})$ is disconnected, this criterion characterizes
each element of $SO_2(\C)$ up to sign.
For two distinct points $x,y$ on $\cp$, each element of $A\cdot N_\C$ corresponds to a dilation and translation of $\cp \setminus \{\infty\}$.  There exists a unique positive scalar $\alpha$ satisfying $\alpha\cdot||x-y||=1$ and a unique $\beta$ in $\C$ satisfying $\beta+\alpha\cdot x=0$.
This specifies $\sigma$ uniquely because $A\cdot N_\C$ is connected.
\end{proof}

The action of $\gi$ on $\cp$ extends to an action on $\confa \C n$.  By Proposition~\ref{p:psoc}, the action fixes two of the $n$ particles, and restricts the third particle to a (projective) circle's worth of freedom.  The following is a natural object to consider:

\begin{defn}
The moduli space $\mc n$ is the quotient space
$$\mc n \ = \ \confa \cp n \ / \ \gi \, .$$
Denote by $\mnc n$ the natural Fulton-MacPherson compactification of this space.
\end{defn}

\begin{rem}
There is a difference between $\mnc n$ and the classical moduli space of curves \CM{n}.  The former is based on real blowups while the latter is based on complex blowups.  Indeed, the group $\pglc$ has six real dimensions, whereas $\gi$ only has five.  Thus, for instance, $\mnc 3$ is equivalent to $\rp$ whereas $\CM{3}$ is simply a point.
\end{rem}

\begin{lem} \label{l:groups}
Each type of bubble receives a different group action:
\begin{enumerate}
\item[1.] The group $\gi$ acts on sphere bubbles.
\item[2.] The group $\pglr$ acts on flat bubbles.
\item[3.] The group $\pslr$ acts on punctured bubbles.
\end{enumerate}
\end{lem}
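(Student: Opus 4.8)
The plan is to recognize each bubble (screen) as itself a moduli space of a type already studied, by reading off the local model of $\Di$ at the collision point and invoking the Fulton--MacPherson / De Concini--Procesi description of the blowup's exceptional divisor. Recall the mechanism: when a building-set element of $\B(n,m)$ along which a set $S$ of particles has collapsed to a point $p$ is blown up, the exceptional divisor fibers over the locus of such configurations, and the fiber over one of them is the (real-)projectivized normal slice to that locus. Unwinding the translations, dilations, and projective identifications this packages shows that the fiber --- the bubble attached at $p$, carrying the particles of $S$ together with a node marking the attaching direction --- is the configuration space of $S$ and the node on the relevant local one-point compactification, modulo a group of symmetries; the lemma amounts to identifying that group in each of the three cases as $\gi$, $\pglr$, or $\pslr$.

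For a sphere bubble, $p$ lies in the interior of $\Di$, a neighborhood of which is diffeomorphic to $\C\subset\cp$ with $p=0$, and the blowup is an ordinary interior real blowup. Modulo an overall complex translation, the relative positions of the $i\ge2$ colliding interior particles form the normal slice $\C^{i-1}$, and the real blowup replaces it by $(\C^{i-1}\setminus\{0\})/\R^{\times}\cong\R\Pj^{2i-3}$, antipodal rays identified. This quotient is exactly $\confa{\cp}{i+1}/\gi$: the node is placed by $\psoc$ (Proposition~\ref{p:psoc}(2)), the remaining positions are normalized by a complex translation $N_\C$ and a positive-real dilation $A$, and the antipodal identification is the $\Z_2=\{I,r\}$ of~\eqref{e:gi}. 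Thus the sphere bubble is $\confa{\cp}{i+1}/\gi=\mc{i+1}$, with compactification $\mnc{i+1}$ --- indeed, this local construction reproduces verbatim the construction of $\mc{\bullet}$, which was introduced for exactly this purpose. Note that only the positive dilations $A$ and the sign $r$, not the full rotational $S^1\subset A_\C$, are divided out, matching $\dim\gi=5$ and $\dim\mnc{i+1}=2i-3$.

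For a flat bubble, $j$ boundary particles meet at a boundary point $p$; this collision locus is an interior wall of $\kdel{n}{m}$, a neighborhood of $\partial\Di$ near $p$ is modeled on a half-plane with $p$ on its edge $\R$, and (when $j\ge3$) the blowup is an ordinary interior real blowup. Every relative position of the colliding particles lies along the boundary circle, so the whole normal slice $\R^{j-1}$ is projectivized to $(\R^{j-1}\setminus\{0\})/\R^{\times}\cong\R\Pj^{j-2}$; the resulting antipodal $\Z_2$ acts on the bubble circle $\rp=\R\cup\{\infty\}$ as the orientation-reversing reflection $r$, and with boundary translation $N$, positive dilation $A$, and the node-placing $\psor$ (Proposition~\ref{p:psl}(2)) the group is $\pglr=\Z_2\cdot\psor\cdot A\cdot N$ by~\eqref{e:pglr}. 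Hence the flat bubble is $\confa{\rp}{j+1}/\pglr=\oM{j+1}$, whose compactification is tiled by associahedra (Theorem~\ref{t:realmod}). For a punctured bubble, a mixed collision occurs: $k\ge1$ interior particles (and $\ell\ge0$ boundary particles) converge to a boundary point, whose locus lies on the manifold boundary of $\kdel{n}{m}$, so the blowup is a genuine boundary blowup, projectivizing only directions tangent to that boundary. The relative positions of the interior particles carry a real component pointing into the open disk, which this blowup leaves un-projectivized; equivalently the orientation-reversing $r$ is not a symmetry, so the group is $\psor\cdot A\cdot N=\pslr$ by~\eqref{e:kan}, acting on the disk-with-boundary-node carrying the $k$ interior and $\ell$ boundary particles. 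This is the situation of Definition~\ref{d:moduli}, so the punctured bubble is $\confb{\Di}{k}{\ell+1}/\pslr=\kij{k}{\ell+1}$.

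The conceptual heart --- and the step to treat most carefully --- is the differing fate of the orientation-reversing reflection $r$ in the last two cases: a genuine identification on flat bubbles but not on punctured ones. Beyond the normal-slice bookkeeping, the transparent reason is orientation --- $\pslr$ preserves the orientation of $\Di$ while $\pglr\setminus\pslr$ reverses it (see the description of $r$ following Proposition~\ref{p:psl}) --- so a flat bubble, carrying no interior particle to witness the orientation, sees $r$ act trivially on its visible configuration and divides it out, while the interior particles on a punctured bubble pin the orientation and leave only $\pslr$. As a consistency check, blowing up the relevant element of $\B(n,m)$ inside the $(2n+m-3)$-manifold $\kdel{n}{m}$ yields a codimension-one divisor whose fibers have exactly the dimensions $2i-3=\dim\mnc{i+1}$, $j-2=\dim\oM{j+1}$, and $2k+\ell-2=\dim\kij{k}{\ell+1}$ found above.
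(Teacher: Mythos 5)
Your proposal is correct and follows the same route as the paper: the paper's (much briefer) proof likewise identifies the $\gi$-action on sphere bubbles with the antipodal/projective identification of the interior real blowup via Proposition~\ref{p:psoc}, the $\Z_2$-component of $\pglr$ on flat bubbles with the mirror-image gluing of the blowup construction, and $\pslr$ as the natural action on the new Poincar\'e disks via Proposition~\ref{p:psl}. You have simply carried out in detail --- normal slices, dimension counts, and the differing fate of $r$ --- what the paper asserts by pointing to its figures and propositions.
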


\begin{proof}
The action of $\gi$ on the sphere bubbles is given by Proposition~\ref{p:psoc} above, where it mimics the real blowup of particle collisions on the plane, as displayed in Figure~\ref{f:blowup-ball}.  The action of $\pglr$ on flat bubbles is based on the decomposition given in Eq.~\eqref{e:pglr}.  There is a $\Z_2$ component which identifies each flat bubble with its mirror image about the geodesic connecting zero to $\infty$ on $\Di$, replicating the boundary blowup structure shown in Figure~\ref{f:blowup-disk}.  Finally, $\pslr$ is the natural group action on the punctured bubbles (Poincar\'{e} disks) from Proposition~\ref{p:psl}.
\end{proof}

Since the arcs on $\Di$ are compatible and thus nonintersecting, each bubble-tree corresponds to a product of smaller moduli spaces.   The group actions from Lemma~\ref{l:groups} yield the subsequent result:

\begin{thm} \label{t:product}
Each bubble gives rise to a compactified moduli space:
\begin{enumerate}
\item[1.] Sphere bubbles with $n$ particles produce $\mnc n$.
\item[2.] Flat bubbles with $m$ particles produce $\M{m}$.
\item[3.] Punctured bubbles with $n$ interior and $m$ boundary particles produce $\kbar n m$.
\end{enumerate}
Moreover, each diagram of $\Di$ with compatible nested arcs corresponds to a product of moduli spaces, with each bubble contributing a factor.
\end{thm}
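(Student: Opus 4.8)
The plan is to verify the three bullet points one at a time by tracing through the identifications already established, and then to assemble the product statement from the combinatorial structure of nested compatible arcs. For item (1), when a loop on $\Di$ is contracted, we obtain a sphere bubble: a copy of $\cp$ carrying the $n$ interior particles that were enclosed by the loop, together with a marked point (the node where the bubble is attached to its parent) which by Figure~\ref{f:bubs}(a) plays the role of $\infty$. By Lemma~\ref{l:groups}(1) the residual automorphism group acting on this screen is exactly $\gi$, so the space of configurations of these $n$ particles on $\cp$ modulo $\gi$ is $\mc n$ by definition, and taking the Fulton--MacPherson compactification (which is how collisions among the bubbled particles are recorded on higher screens) gives $\mnc n$. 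Item (2) is parallel: a contracted boundary arc yields a flat bubble, a disk all of whose particles — the $m$ enclosed boundary particles plus the attaching node, which sits on the boundary — lie on $\partial\Di$, identified with $\rp$. By Lemma~\ref{l:groups}(2) the group acting is $\pglr$, and $\confa{\rp}{m}/\pglr$ is precisely $\oM{m}$ as in Theorem~\ref{t:realmod}; its Deligne--Mumford--Knudsen compactification (which, as noted, coincides with the Axelrod--Singer/Fulton--MacPherson real compactification for this arrangement) is $\M{m}$.

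Item (3) is the self-referential case: a contracted mixed arc produces a punctured bubble, itself a Poincar\'{e} disk with the $n$ enclosed interior particles, the $m$ enclosed boundary particles, and an attaching node on the boundary. By Lemma~\ref{l:groups}(3) the group is $\pslr$, so before compactification this screen carries the space $\confb{\Di}{n}{m}/\pslr = \kij n m$ (Definition~\ref{d:moduli}), whose compactification, recording the further collisions of these particles, is $\kbar n m$. One should remark that the attaching node is a genuine marked particle on each bubble, so the particle counts $n$, $m$ in the statement should be read as the total counts on the bubble including the node; this is the convention consistent with the $n,m\ge 1$ hypothesis used throughout Section~\ref{s:disk}.

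For the "moreover" clause, I would argue as follows. A diagram of $\Di$ decorated with a collection of compatible nested arcs is equivalent to a bubble tree: since compatible arcs can be isotoped to be disjoint, the nesting poset of the arcs is a forest, and contracting all arcs simultaneously produces one bubble per arc (plus the root disk), each bubble inheriting the particles and child-nodes immediately interior to the corresponding arc but exterior to all arcs nested strictly inside it. The stratum of $\kbar n m$ indexed by this diagram is then the locus where all these arcs have collapsed; by the Fulton--MacPherson description of the compactification as an iterated blowup (equivalently, by the De Concini--Procesi building-set formalism invoked earlier), such a stratum is canonically the product, over the nodes of the tree, of the open moduli spaces attached to each bubble, and its closure is the product of the corresponding compactified spaces classified in (1)--(3). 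Thus each diagram with nested compatible arcs yields a product of moduli spaces with one factor per bubble, as claimed. The main obstacle I anticipate is not any single computation but the bookkeeping in this last step: one must check that the normal-bundle directions introduced by each blowup in the building set $\B(n,m)$ match up with the moduli parameters of the newly created bubble — i.e., that "approaching the collision locus" is genuinely parametrized by the quotient-configuration space on the screen with its residual group action — and that distinct arcs contribute independent (product) factors precisely because compatibility forces their contractions to be transverse. This is exactly the content of the iterated-blowup picture of Section~\ref{s:fm}, so the argument is a matter of carefully invoking it rather than proving something new, but it is where the real work of the proof lies.
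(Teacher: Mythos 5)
Your proposal is correct and follows essentially the same route as the paper, which states this theorem as an immediate consequence of Lemma~\ref{l:groups} together with the observation that compatible arcs are nonintersecting, so each bubble-tree factors into a product of smaller moduli spaces. Your additional care about the attaching node counting as a marked particle and about the transversality/independence of the blowup directions only makes explicit what the paper leaves implicit.
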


\begin{exmp}
Consider $\kbar 2 2$ given in Figure~\ref{f:k22-iterate}; it is redrawn (with a twist of $\pi$ of the top disk) in Figure~\ref{f:k22-bar} on the left.  The colors and the labels of these figures are coordinated to match. 
\begin{enumerate}
\item[1.] There is one interior collision (f) resulting in a sphere bubble adjoined to a punctured bubble, associated to the space $\kbar 1 2 \times \mnc{3}$.  Since both $\kbar 1 2$ and $\mnc{3}$ are topological circles, the resulting divisor is a two-torus.  
\item[2.] There is one boundary collision (d) producing a punctured bubble adjoined to a flat bubble, giving rise to $\kbar 2 1 \times \M{3}$.  Since $\M{3}$ is a point, the resulting divisor is simply $\kbar 2 1$.
\item[3.] Two mixed collisions (b, c) produce $\kbar 1 2 \times \kbar 1 2$, yielding a two-torus, the product of two circles.
\item[4.] Two mixed collisions (a, e) produce $\kbar 1 1 \times \kbar 1 3$.  Since $\kbar 1 1$ is a point, we are left with $\kbar 1 3$ of Figure~\ref{f:k13-tiles}(a).
\end{enumerate}
\end{exmp}

\begin{figure}[h]
\includegraphics[width=\textwidth]{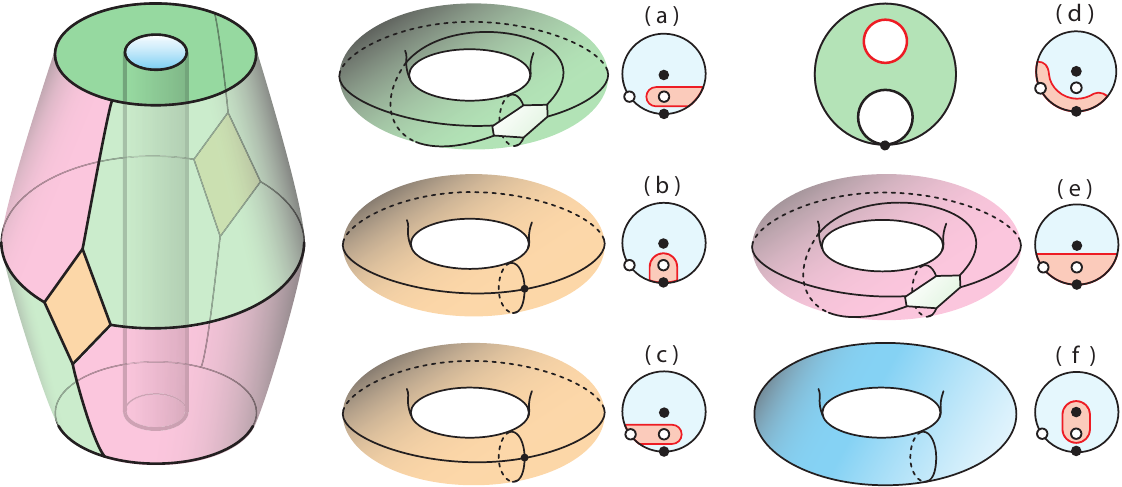}
\caption{$\kbar 2 2$ and its exceptional divisors.}
\label{f:k22-bar}
\end{figure}

%
%
\section{Combinatorial Results} \label{s:combin}
\subsection{}

The stratification of $\kbar n m $ based on bubble-trees leads to several inherent combinatorial structures.

\begin{prop} \label{p:chambers}
The space $\kbar n m$ is tiled by $(m-1)!$ chambers. 
\end{prop}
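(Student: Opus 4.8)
The plan is to reduce the count of chambers to a count of connected components of the open stratum $\kij n m$, and then to check that the Fulton--MacPherson blowups neither merge nor split top--dimensional pieces. Throughout I would take $m \geq 1$; the cases $m = 0$ and $n = 0$ are the special ones flagged in the remark following Proposition~\ref{p:fixedpts} and in Section~\ref{s:combin}, so a chamber should be understood as (the closure in $\kbar n m$ of) a connected component of $\kij n m$.

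First I would compute the number of components of $\confb \Di n m$. Since the diagonal $\Delta$ only forbids interior--interior collisions, boundary--boundary collisions, and interior particles landing on $\partial\Di$, the configuration space $\confb \Di n m$ splits as a product: $n$ distinct points ranging over the interior of $\Di$, times $m$ distinct points ranging over $\partial\Di$. The first factor is the configuration space of $n$ points in an open disk, hence path--connected; the second factor is the configuration space of $m$ labeled points on a circle, and here a path cannot change the cyclic order of the labels (a locally constant invariant) while any two configurations realizing the same cyclic order are joined by a path, so its components are exactly the $(m-1)!$ cyclic orderings of $\{1,\dots,m\}$. Thus $\confb \Di n m$ has precisely $(m-1)!$ connected components. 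Because the decomposition \eqref{e:kan} presents $\pslr$ as a product of the circle $\psor$ with the two contractible groups $A$ and $N$, the group $\pslr$ is connected and so preserves each of these components; hence the quotient $\kij n m$ also has exactly $(m-1)!$ connected components, the chambers. Equivalently, via Proposition~\ref{p:fixedpts}, a chamber is cut out by fixing a linear order of the $m-1$ free boundary particles along the arc $\partial\Di \setminus \{\infty\}$, the $n-1$ free interior particles being free to move in the connected space obtained by deleting $\mu$ from the interior of $\Di$.

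Next I would pass to the compactification. Forming $\kdel n m$ only adjoins the collision locus $\Delta$, which has positive codimension, so $\kij n m$ sits inside $\kdel n m$ as an open dense subset. The space $\kbar n m$ is obtained from $\kdel n m$ by the iterated blowups along the elements of the minimal building set $\B(n,m)$; all of these centers lie inside $\Delta$, and after the codimension--one elements are discarded they have codimension at least two. A blowup is a homeomorphism over the complement of its center, so the open dense stratum $\kij n m$ survives each blowup unchanged and remains dense in $\kbar n m$ with its $(m-1)!$ components intact. Their closures therefore cover $\kbar n m$, and two distinct such closures --- being closures of disjoint open sets --- meet only in a nowhere--dense set, i.e.\ along exceptional divisors and collision strata, which is codimension at least one. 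This exhibits $\kbar n m$ as tiled by $(m-1)!$ chambers.

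The step I expect to require the most care is the last one: verifying that the iterated blowup construction of $\kbar n m$ respects the chamber decomposition of $\kij n m$. Granting the De Concini--Procesi and Fulton--MacPherson results quoted in Section~\ref{s:fm}, this reduces to the two observations that every center in $\B(n,m)$ is contained in $\Delta$ (so disjoint from the open stratum) and has codimension $\geq 2$, and that blowing up is an isomorphism away from its center; once these are in place, the bookkeeping is routine, and the only genuinely substantive input is the elementary count $\pi_0(\,$configurations of $m$ labeled points on a circle$\,) = (m-1)!$ together with the connectedness of the interior configuration space.
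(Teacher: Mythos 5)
Your proof is correct and follows essentially the same route as the paper: the $(m-1)!$ arises from the cyclic orderings of the $m$ boundary particles (equivalently, linear orderings of the $m-1$ free particles after one is fixed at $\infty$). You additionally spell out details the paper leaves implicit --- connectedness of the interior configuration factor, connectedness of $\pslr$, and the fact that blowing up codimension $\geq 2$ centers inside $\Delta$ neither merges nor splits the open chambers --- which is a reasonable elaboration rather than a different argument.
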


\begin{proof}
There exist $m!$ orderings of the $m$ boundary particles on $\Di$. By Proposition~\ref{p:psl}, the action of $\psor$ is characterized by fixing one boundary particle at $\infty$. The orderings of the remaining $m-1$ particles on $\partial \Di$ represent unique equivalence classes.  
\end{proof}

There is a combinatorial gluing on the boundaries of these $(m-1)!$ chambers which result in $\kbar n m$, based on the following definition:  A \emph{flip} of a flat bubble in a bubble-tree is obtained by replacing it with its mirror image but preserving the remaining bubbles on the tree.

\begin{thm}
Two codimension $k$ cells, each corresponding to a bubble-tree coming from a diagram of $\Di$ with $k$ nested arcs, are identified in $\kbar n m$ if \emph{flips} along flat bubbles of one diagram result in the other.
\end{thm}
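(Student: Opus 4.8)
The plan is to show that the two chambers (equivalently, two codimension-$k$ cells) glue along a face precisely when their bubble-trees differ by flips of flat bubbles, and that these are the only identifications forced by the Fulton--MacPherson construction. First I would recall from Proposition~\ref{p:chambers} that the $(m-1)!$ chambers are indexed by cyclic orderings of the $m$ boundary particles with $\infty$ fixed, so two top-dimensional chambers share a wall exactly when their orderings differ by an adjacent transposition not involving $\infty$; such a wall is a codimension-one stratum whose bubble-tree has a single flat bubble carrying the two swapped particles, and the gluing there is exactly the flip of that flat bubble (its mirror image about the geodesic through $0$ and $\infty$). This is the base case $k=1$, and it follows directly from Lemma~\ref{l:groups}(2): the $\Z_2$ in the decomposition \eqref{e:pglr} of $\pglr$ acting on a flat bubble is precisely the reflection $r$, so a flat bubble and its flip represent the same point of $\M{m_{\mathrm{flat}}}$, hence the same cell of $\kbar n m$.

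Next I would handle general codimension $k$ by induction on the bubble-tree structure. By Theorem~\ref{t:product}, a codimension-$k$ cell is a product of moduli spaces, one factor per bubble, where the factors are $\mnc{\cdot}$ (sphere bubbles), $\M{\cdot}$ (flat bubbles), or $\kbar{\cdot}{\cdot}$ (punctured bubbles). A flip replaces one flat-bubble factor $\M{p}$ by the identical space via the reflection, leaving all other factors and the tree's incidence data unchanged; so a flip is an automorphism of the product that is the identity on the underlying moduli space, and therefore identifies the two labeled diagrams as the same cell of $\kbar n m$. Conversely, suppose two labeled $k$-arc diagrams give the same cell. Each bubble in the tree is itself one of the three model spaces, and by Theorem~\ref{t:realmod}, Theorem~\ref{t:cycmod}, the remark preceding Lemma~\ref{l:groups}, and induction on $n+m$, the only diagram-level identifications inside a flat bubble come from the $\Z_2$-reflection, inside a sphere bubble come from the $r$-antipodal and $\psoc$-rotational symmetries already built into $\gi$ (which do not permute the combinatorial arc data), and inside a punctured bubble come, inductively, again from flips. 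Tracking these bubble-by-bubble shows any identification is a composition of flips of flat bubbles.

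The main obstacle I expect is the converse direction: ruling out ``accidental'' identifications of cells that are not visibly flips. Concretely, one must verify that the $\psoc$ and $A\cdot N_\C$ symmetries on sphere bubbles and the $A\cdot N$ symmetries on punctured bubbles act freely enough on configurations that they never send one compatible-nested-arc diagram to a combinatorially distinct one — i.e., that the only automorphisms of a bubble preserving the particle labels but permuting the arc/nesting data are the flips. This is where I would invoke Proposition~\ref{p:psl}, Proposition~\ref{p:psoc}, and the uniqueness clauses therein: fixing the required particles rigidifies each bubble so that its remaining symmetry group is exactly $\Z_2$ (a flip) for flat bubbles and trivial-on-combinatorics for the others. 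A secondary subtlety is bookkeeping the ``rooted'' structure when $\infty$ lies in a particular bubble versus a descendant bubble, so that flips are only applied to genuinely flat (boundary-collision) bubbles and not mistakenly to punctured ones; the partially-planar tree picture of Figure~\ref{f:trees} makes this precise, since flat bubbles are exactly the all-planar vertices with no spatial edge below them.
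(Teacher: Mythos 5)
Your forward argument is correct and is essentially the paper's proof: the $\Z_2$ component of $\pglr$ acting on a flat bubble is the mirror reflection, so diagrams related by flips label the same point of the corresponding $\M{\cdot}$ factor and hence the same cell, with the chamber/ordering structure of Proposition~\ref{p:chambers} organizing the gluing. Note that the theorem only asserts the ``if'' direction, so your second and third paragraphs on ruling out accidental identifications, while a reasonable concern, are not needed for the statement as written and are not addressed in the paper's proof either.
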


\begin{proof}
The group $\pglr$ acts on flat bubbles and there is a $\Z_2$ component of $\pglr$ which identifies each flat bubble with its mirror image.  Since each chamber of $\kbar n m$ is identified with the cyclic ordering of the $m-1$ boundary particles, a flip identifies faces of one chamber with another.
\end{proof}

\begin{exmp}
Figure~\ref{f:flip}(a) and (d) show the two vertices  from Figure~\ref{f:eye-glue}(b), the corners of the Kontsevich eye.   Parts (b) and (c) show the bubble-trees of these cells, respectively.  In $\kbar 2 1$, both of these trees are identified (glued together) since a \emph{flip} of the flat disk of (b) results in (c).
\end{exmp}

\begin{figure}[h]
\includegraphics[width=\textwidth]{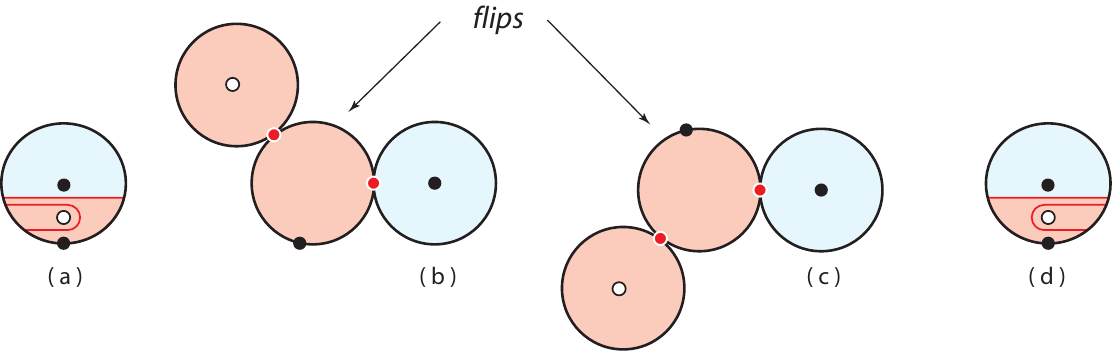}
\caption{Two cells of the eye identified in $\kbar 2 1$ by flips of flat disks.}
\label{f:flip}
\end{figure}

\begin{thm}
The space $\kbar 0 n$ is isomorphic to two disjoint copies of the real moduli space \M{n} of curves, tiled by associahedra $K_{n-1}$.
\end{thm}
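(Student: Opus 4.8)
The plan is to reduce the theorem to Theorem~\ref{t:realmod} by identifying the open moduli space $\kij 0 n$ with two disjoint copies of $\oM n$ via a homeomorphism that respects the stratification by collisions, and then to carry this identification through the Fulton--MacPherson compactification. Throughout we take $n\geq 3$, the cases $n\leq 2$ being degenerate.

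Since there are no interior particles, $\confb{\Di}{0}{n}=(\partial\Di)^n-\Delta=\confa{\rp}{n}$, the boundary circle $\partial\Di$ being $\R\cup\{\infty\}\cong\rp$. Hence $\kij 0 n=\confa{\rp}{n}/\pslr$, whereas by definition $\oM n=\confa{\rp}{n}/\pglr$. Using the coset decomposition $\pglr=\pslr\cup\, r\cdot\pslr$ recorded in Eq.~\eqref{e:pglr}, the natural surjection $\kij 0 n\to\oM n$ is precisely the quotient by the residual $\Z_2=\{[I],[r]\}$.

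The heart of the argument is to split this quotient using cyclic orientation. To a configuration $(q_1,\dots,q_n)$ assign $\varepsilon=+$ or $-$ according as $q_1,q_2,q_3$ appear in that counterclockwise order on $\partial\Di$ or not. Every element of $\pslr$ acts on $\partial\Di$ by an orientation-preserving M\"obius map and so preserves $\varepsilon$; thus $\varepsilon$ descends to a map $\varepsilon\colon\kij 0 n\to\{+,-\}$, while the reflection $r$ reverses orientation and hence swaps the two values. Consequently $\kij 0 n=\varepsilon^{-1}(+)\,\sqcup\,\varepsilon^{-1}(-)$ as a disjoint union of open-and-closed subsets, $\Z_2$ interchanges the two pieces, and therefore each piece maps bijectively onto the quotient $\oM n$: injectivity says that two configurations in one $\pglr$-orbit with the same value of $\varepsilon$ already lie in one $\pslr$-orbit, which holds because any orientation-reversing M\"obius map would flip $\varepsilon$; surjectivity holds because $r$ applied to any configuration produces one of each sign. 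Since the quotient map by a finite group action is open, each $\varepsilon^{-1}(\pm)\to\oM n$ is a continuous open bijection, hence a homeomorphism, giving $\kij 0 n\cong\oM n\sqcup\oM n$. Moreover the identification respects collisions: with $n$ boundary particles and none in the interior, the minimal building set $\B(0,n)$ contains no loops, only arcs enclosing subsets of boundary particles, and these correspond exactly to the boundary divisors of the two copies of $\oM n$.

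Finally, the Fulton--MacPherson compactification of Section~\ref{s:fm} is produced by iterated real blowups along the minimal building set, data depending only on the moduli space together with its collision loci; so the stratification-preserving homeomorphism above upgrades to $\kbar 0 n\cong\M n\sqcup\M n$, with $\M n$ the compactification of $\oM n$ appearing in Theorem~\ref{t:realmod}. That theorem then tiles each copy of $\M n$ by $(n-1)!/2$ associahedra $K_{n-1}$, so $\kbar 0 n$ is tiled by $(n-1)!$ copies of $K_{n-1}$, in agreement with the chamber count of Proposition~\ref{p:chambers}. I expect the one point needing genuine care to be this last step --- verifying that the homeomorphism of open moduli spaces extends compatibly over the compactifications and their stratifications. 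The cleanest way to do this is probably not to invoke functoriality of iterated blowups in the abstract, but to check directly that the two cell-gluing recipes agree: the identification of codimension-$k$ cells by flips of flat bubbles that glues the $(n-1)!$ chambers of $\kbar 0 n$ should match, under $\varepsilon$, the associahedral gluing that assembles $(n-1)!/2$ copies of $K_{n-1}$ into each $\M n$.
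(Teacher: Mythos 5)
Your proposal is correct and follows essentially the same route as the paper: both split $\confa{\rp}{n}/\pslr$ into two components according to the cyclic orientation of three reference particles (your $\varepsilon$ is exactly the paper's two equivalence classes of clockwise orderings of $0,1,\infty$), identify each component with \oM{n}, and then invoke Theorem~\ref{t:realmod}. Your treatment is somewhat more careful than the paper's, particularly in flagging and addressing the need to check that the identification extends over the Fulton--MacPherson compactification, a point the paper's proof leaves implicit.
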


\begin{proof}
For $n$ particles on the boundary, the action of $\pslr$ fixes three such particles (call them $0, 1, \infty$) due to M\"obius transformations.  However, there are two equivalence classes of such orderings, with the three fixed particles arranged clockwise as either ${0,1,\infty}$ or as ${1,0,\infty}$.   Each such equivalence class has particles only on the boundary, and is acted upon by flips from $\pglr$.  The result follows from Theorem~\ref{t:realmod}.
\end{proof}

\begin{thm}
The moduli space $\kbar 1 n$ is isomorphic to the space of cyclohedra $\Cyc{n}$.
\end{thm}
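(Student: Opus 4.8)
The plan is to exhibit an explicit homeomorphism between $\kbar 1 n$ and $\Cyc{n}$ by matching their stratifications, then invoke the fact that both are obtained as Fulton--MacPherson--type compactifications of configuration spaces whose open strata already agree. First I would compare the open pieces: $\kij 1 n$ consists of one interior particle and $n$ boundary particles on $\Di$, modulo $\pslr$; by Proposition~\ref{p:fixedpts} (or the Remark following it) we may fix the interior particle at the center $\mu$ and use up the remaining rotational $\psor$ freedom, leaving $n$ boundary particles on a circle modulo rotation — precisely $\confa{S^1}{n}/S^1$, the open part of $\Cyc{n}$ from Theorem~\ref{t:cycmod}. So the uncompactified spaces are canonically identified, and it remains to check the compactifications match.

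Next I would set up the dictionary between strata. On the $\Cyc{n}$ side, Theorem~\ref{t:cycmod} tells us the compactification is tiled by $(n-1)!$ copies of the cyclohedron $W_n$, whose faces are indexed by bracketings of $n$ circularly-arranged letters (the poset $B(n)$). On the $\kbar 1 n$ side, Proposition~\ref{p:chambers} gives exactly $(n-1)!$ chambers, and the cells are indexed by compatible nested arcs on $\Di$. I would argue that, because there is exactly one interior particle, every arc must enclose that interior particle together with some cyclically-consecutive block of boundary particles (an arc enclosing only boundary particles would be a ``flat'' collision, but with only one interior particle present the recursion forces these to be exactly the cyclic brackets), and a compatible nested family of such arcs is precisely a bracketing in $B(n)$. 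This gives a bijection between codimension-$k$ cells of $\kbar 1 n$ and rank-$k$ elements of $B(n)$, compatible with the face poset, hence a combinatorial isomorphism chamber-by-chamber; the chamber count $(n-1)!$ matches on both sides. Theorem~\ref{t:product} provides the compatibility of this identification with taking boundary faces: a single arc enclosing the interior particle and $j$ boundary particles yields a punctured bubble with $1$ interior and $(n-j+1)$ boundary marked points adjoined to a punctured bubble with $1$ interior and $(j+1)$ boundary points, i.e.\ $\kbar 1 {n-j+1}\times\kbar 1 {j+1}$, which matches the way a cyclohedron face decomposes as a product of a smaller cyclohedron and an associahedron — here the inner bubble, having its interior particle at the collision, recursively reproduces a lower cyclohedron while the factor coming from purely-boundary sub-collisions reproduces associahedral factors $K_\bullet$. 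Assembling these, the gluing maps on chamber boundaries (flips along flat bubbles, by the preceding theorem) correspond exactly to the gluing of the $(n-1)!$ copies of $W_n$ described in Theorem~\ref{t:cycmod}.

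The main obstacle I anticipate is the careful verification that the facet structure of $\kbar 1 n$ really is the cyclohedral one rather than, say, that of the graph associahedron for some other graph — that is, showing that with a single interior particle the only relevant building-set arcs are the ``cyclic interval'' arcs and that their nesting poset is $B(n)$ and not something larger. Concretely, one must rule out loops (closed arcs enclosing multiple interior particles) — automatic here since $n_{\text{int}}=1$ — and must check that an arc cutting off a boundary-only block behaves, after contraction, as a genuine sub-bracket rather than an independent degree of freedom; Theorem~\ref{t:product} is the tool, since it identifies the flat-bubble factors as $\M{m}$, tiled by associahedra, exactly the associahedral cells of a cyclohedron face. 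A secondary bookkeeping point is matching dimensions: $\dim\kbar 1 n = 2\cdot 1 + n - 3 = n-1 = \dim W_n$, which is reassuring and should be stated. Once the stratified bijection and the product-compatibility are in hand, the homeomorphism follows by the standard argument that a dimension-preserving isomorphism of the face posets of two regular CW compactifications of the same open manifold, compatible with the recursive boundary structure, is realized by a homeomorphism.
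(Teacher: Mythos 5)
Your first paragraph is, in substance, the paper's entire proof: fix the interior particle at $\mu$ to kill $A\cdot N$, observe that the residual $\psor$ acts on $\partial\Di$ as the rotation group of the boundary circle, identify $\kij{1}{n}$ with $\confa{S^1}{n}/S^1$ (equivalently, after also fixing one boundary particle, an $(n-1)$-torus cut into $(n-1)!$ simplicial chambers), and note that the Fulton--MacPherson compactification truncates each simplex into a cyclohedron, so that Theorem~\ref{t:cycmod} finishes the job. Your dimension check $\dim\kbar{1}{n}=n-1=\dim W_n$ and the chamber count via Proposition~\ref{p:chambers} are both correct.

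The detailed stratification matching in your second paragraph, however, contains a concrete error on which your face-poset bijection rests. The facet of $\kbar{1}{n}$ determined by an arc cutting off $b=n-j\ge 2$ boundary particles (equivalently, whose complementary region contains the interior particle and the other $j$ boundary particles) is \emph{not} $\kbar{1}{n-j+1}\times\kbar{1}{j+1}$: that product carries two interior particles' worth of data and has dimension $n$ rather than the required $n-2$. By Theorem~\ref{t:product} together with Theorem~\ref{t:facets} (which, for $n=1$, shows there are no interior-collision or mixed-collision divisors at all --- only boundary collisions), the facet is $\kbar{1}{j+1}\times\M{b+1}$, a punctured bubble adjoined to a \emph{flat} bubble; the factor $\M{b+1}$, tiled by associahedra $K_b$, is precisely what produces the cyclohedral facet recursion $W_{j+1}\times K_b$. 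You do say the correct thing in words immediately afterward (``a smaller cyclohedron and an associahedron''), so this reads as a slip rather than a conceptual error, but the displayed product is the step your bijection hangs on and must be corrected. Relatedly, your assertion that ``every arc must enclose the interior particle'' should be restated: every admissible arc is a boundary-collision arc enclosing a cyclically consecutive block of $b\ge 2$ boundary particles, with the fixed interior particle lying in the complementary region and never itself colliding with $\partial\Di$.
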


\begin{proof}
As $\pslr$ fixes the interior particle and a boundary particle, $\kdel 1 n$ becomes an $(m-1)$-torus, with $m-1$ labeled particles moving on the boundary of $\Di$.  Each ordering of these labels yields a simplicial chamber, all of which glue along the long diagonal of a cube, whose faces identify to form the torus.  The Fulton-MacPherson compactification truncates each simplex into a cyclohedron, resulting in $\Cyc{n}$ promised by Theorem~\ref{t:cycmod}.  
\end{proof}

\begin{rem}
Figure~\ref{f:k4w3}(a) shows the associahedron $K_4$ as a tile of $\kbar 0 5$.  The other tiles of $\kbar 0 5$ correspond to all ways of labeling the five boundary particles.  Similarly, part (b) depicts the cyclohedron $W_3$ as a tile of $\kbar 1 3$.  Compare these diagrams with Figure~\ref{f:k4w3p}.
\end{rem}

\begin{figure}[h]
\includegraphics[width=\textwidth]{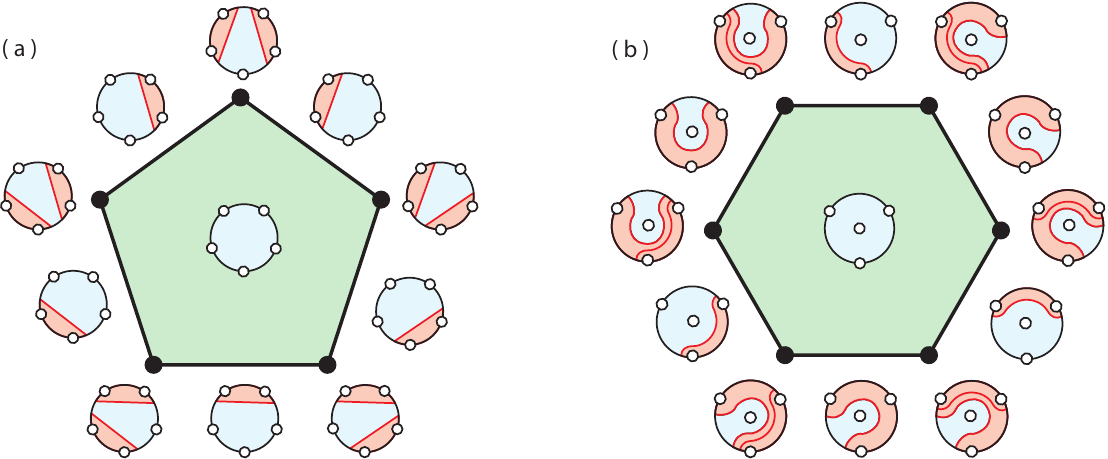}
\caption{(a) Associahedron $K_4$ of $\kbar 0 5$ and (b) cyclohedron $W_3$ of $\kbar 1 3$.}
\label{f:k4w3}
\end{figure}

\begin{exmp}
Figure~\ref{f:k13-tiles}(b) shows two hexagonal cyclohedra $W_3$ tiling $\kbar 1 3$.  Figure~\ref{f:k14-iterate} shows the iterated truncation of the three-torus $\kdel 1 4$, yielding $\kbar 1 4$ tiled by six cyclohedra.
\end{exmp}

\begin{figure}[h]
\includegraphics[width=\textwidth]{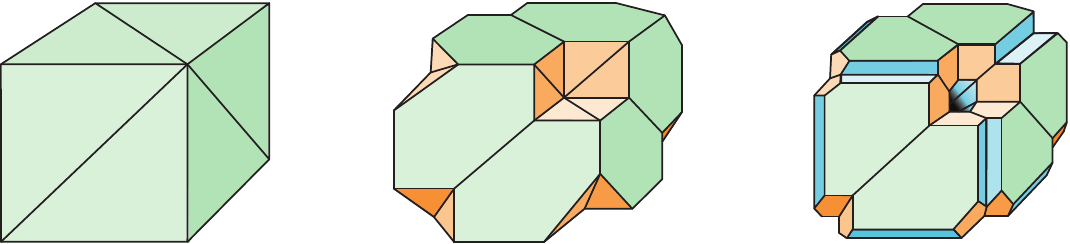}
\caption{Iterated blowups resulting in $\kbar 1 4$ tiled by six cyclohedra $W_4$.}
\label{f:k14-iterate}
\end{figure}

\subsection{}

We close by enumerating the exceptional divisors (the codimension one spaces) of $\kbar m n$.  With this result, one can use induction to calculate all codimension $k$ spaces if desired.

\begin{thm} \label{t:facets}
Let $n,m\geq 1$.  Then the exceptional divisors of \kbar{n}{m} are categorized by the following classification:
\begin{enumerate}

\item  There are a total of $2^n-n-1$ divisors enumerating interior collisions, with $\binom{n}{i}$ divisors where $i>1$ interior particles collide, each topologically equivalent to
$$\kbar{n-i+1}{m} \ \times \ \mnc{i+1}.$$

\item There are a total of $2^m-m-1$ divisors enumerating boundary collisions, with $\binom{m}{b}$ divisors where $b\geq 2$ boundary particles collide, each topologically equivalent to
$$\kbar{n}{m-b+1} \ \times \ \M{b+1}.$$

\item There are a total of $2^m(2^{n-1}-1)$ divisors enumerating mixed collisions, with $\binom{n-1}{i}\binom{m}{b}$ divisors where $1\leq i\leq n-1$ interior particles and $b\geq 0$ boundary particles collide, each topologically equivalent to
$$\kbar{n-i}{m-b+1} \ \times \ \kbar{i}{b+1}.$$
\end{enumerate}
\end{thm}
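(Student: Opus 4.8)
The plan is to enumerate the exceptional divisors of $\kbar n m$ by classifying the codimension-one strata, which by the stratification theorem correspond exactly to diagrams of $\Di$ carrying a single arc from the building set $\B(n,m)$. Recall that a single arc encloses either $i$ interior particles (a loop), or $b$ boundary particles (a boundary arc enclosing two or more boundary particles), or a mixed collection of $i$ interior and $b$ boundary particles. Codimension one forces the dimension count $2i+b$ to equal the right value, but more usefully, since we discounted codimension-one cells of $\kdel n m$ from the building set, I need the arc to genuinely cut the disk — so I must be careful about which $(i,b)$ are permissible. The three cases of the theorem are precisely these three types of single arc, so the proof splits accordingly.

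First I would handle the interior collisions. An arc enclosing $i$ interior particles (and no boundary particles) is a loop; contracting it produces a sphere bubble carrying those $i$ particles plus the node, while the base punctured bubble retains the remaining $n-i$ interior particles, all $m$ boundary particles, and the new node (counted as an interior particle). By Theorem~\ref{t:product} the resulting divisor is $\kbar{n-i+1}{m}\times\mnc{i+1}$. The constraint is $i\geq 2$: a single interior particle gives nothing, and I should check $i=n$ is still allowed (it is — the base bubble then has one interior particle, the node). The count of choices of which $i$ particles collide is $\binom{n}{i}$, and summing $\sum_{i=2}^{n}\binom{n}{i}=2^n-n-1$. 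Second, boundary collisions: an arc enclosing $b\geq 2$ boundary particles (no interior particles) contracts to a flat bubble with those $b$ particles plus the node; the base keeps $m-b$ boundary particles, the node, and all $n$ interior particles, giving $\kbar{n}{m-b+1}\times\M{b+1}$ via Theorem~\ref{t:product} part (2). Here I must verify $b\leq m$ is the only upper bound and that $b\geq 2$ is the genuine lower bound (one boundary particle encloses nothing); the count is $\sum_{b=2}^{m}\binom{m}{b}=2^m-m-1$.

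The third, mixed, case is where the bookkeeping is most delicate and is the main obstacle. A mixed arc has endpoints on $\partial\Di$ and separates $i\geq 1$ interior particles together with $b\geq 0$ \emph{consecutive} boundary particles from the rest; contracting it yields a punctured bubble carrying those $i$ interior and $b$ boundary particles plus one new boundary node, adjoined to a base punctured bubble with $n-i$ interior and $m-b$ boundary particles plus the new boundary node — hence $\kbar{n-i}{m-b+1}\times\kbar{i}{b+1}$. The subtlety is the counting factor $\binom{n-1}{i}\binom{m}{b}$ rather than $\binom{n}{i}\binom{m}{b}$: I would explain this via Proposition~\ref{p:fixedpts}, since the chords are taken in the normalized picture where one interior particle is pinned at $\mu$, so the $i$ interior particles that migrate into the bubble are chosen from the $n-1$ movable ones (equivalently, the fixed $\mu$ must remain on the base bubble — a mixed arc enclosing $\mu$ would, after contraction, leave $\mu$ in the sub-bubble, but this configuration is identified with one where the node sits on the base, so to avoid double-counting we fix $\mu$ outside). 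For the boundary particles there is no such constraint, since the boundary node plays the role of $\infty$ on both bubbles, giving the honest $\binom{m}{b}$; the consecutivity of the $b$ boundary particles is automatic from planarity of the arc. The lower bound $i\geq 1$ comes from the fact that $i=0$ reduces to case (2), and there is no nontrivial upper bound obstruction beyond $i\leq n-1$ and $b\leq m$. Summing, $\sum_{i=1}^{n-1}\binom{n-1}{i}\sum_{b=0}^{m}\binom{m}{b}=(2^{n-1}-1)\cdot 2^m$, as claimed. I would close by noting that the topological identifications in all three cases are immediate consequences of Theorem~\ref{t:product} once the combinatorial type of the single-arc diagram is pinned down, so the only real work is the enumeration, and within that, isolating the $n-1$ versus $n$ discrepancy in the mixed case.
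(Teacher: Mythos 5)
Your proposal is correct and follows essentially the same route as the paper: classify divisors by single-arc diagrams, read off the product structure from Theorem~\ref{t:product}, and explain the $\binom{n-1}{i}$ factor in the mixed case by the fixed interior particle $\mu$ being barred from boundary collisions — which is exactly the paper's argument, with your version supplying the explicit binomial sums. No gaps.
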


\begin{proof}
The product structure of these divisors is provided by Theorem~\ref{t:product}.
The exceptional divisors of $\kbar{n}{m}$ are enumerated by particle configurations in which a single collision has occurred, of which there are three types.  For interior collisions, there are $\binom{n}{i}$ ways to choose exactly which $i$ of the $n$ interior particles are in the collision.  For boundary collisions, there are $\binom{m}{b}$ ways to choose exactly which $b$ of the $m$ boundary particles are in the collision.  For mixed collisions, we have chosen our configurations modulo the action of $\pslr$ to be represented by a fixed particle in the interior of the disk, and so this particle may not participate in collisions on the boundary.  There are then $\binom{n-1}{i}$ ways to choose $i$ of the remaining $n-1$ interior particles and $\binom{m}{b}$ ways to choose $b$ of the $m$ boundary particles that are in a mixed collision.
\end{proof}

\begin{rem}
The exceptional divisors corresponding to boundary collisions and interior collisions are in the interior of the moduli space whereas mixed collision divisors are on its boundary.
\end{rem}

\begin{ack}
We thank Eduardo Hoefel, Hiroshige Kajiura, Melissa Liu, and Cid Vipismakul for helpful conversations and clarifications, and a special thanks to Jim Stasheff for his continued encouragement.  We are also grateful to Williams College and to the NSF for partially supporting this work with grant DMS-0353634.
\end{ack}

%
%
\bibliographystyle{amsplain}

\end{document}